\documentclass[reqno, 11pt]{amsart}
\usepackage[margin=1.1in]{geometry}
\usepackage{amsmath,amsthm,amssymb,enumerate,fancyhdr,mathtools}
\usepackage{enumitem}
\usepackage{physics}
\usepackage{amsmath,amsthm,amssymb,enumerate,mathtools}
\usepackage[numbers, comma, sort&compress]{natbib}
\usepackage{bbm}
\usepackage{enumitem}
\usepackage{comment}
\usepackage[toc,page]{appendix}
\usepackage{csquotes}
\usepackage{fullpage}
\usepackage[pdfpagemode=UseNone,bookmarksopen=false,colorlinks=true]{hyperref}
\hypersetup{colorlinks=true,linkcolor=blue,citecolor=blue,filecolor=blue,urlcolor=blue}
\usepackage{xcolor}

\newcommand{\weak}{\stackrel{w}{\longrightarrow}}

\newcommand{\one}{{\bf 1}}

\newcommand{\Gg}{\mathcal{G}}

\newcommand{\GG}{\mathbb{G}}
\newcommand{\Pro}{\mathbb{P}}

\newcommand{\bbe}{\protect{\mathbb E}}

\newcommand{\Var}{\textbf{Var}}

\newcommand{\bigo}{\mathrm{O}}
\newcommand{\Exp}{\mathbb{E}}
\newcommand{\smallo}{{\protect{\mathrm{o}}}}

\newcommand{\lep}{\left(}
\newcommand{\rip}{\right)}
\newcommand{\ERe}{Erd\H{o}s-R\'enyi }
\newcommand{\CLe}{Chung-Lu }
\newcommand{\ee}{\mathrm{e}}

\numberwithin{equation}{section}

\newtheorem{theorem}{Theorem}[section]
\newtheorem{lemma}[theorem]{Lemma}

\newtheorem{corollary}[theorem]{Corollary}
\newtheorem{remark}[theorem]{Remark}
\newtheorem{definition}[theorem]{Definition}

\newtheorem{assumption}[theorem]{Assumption}


\def\Var{{\rm Var}}
\def\E{{\rm E}}

\newcommand{\la}{\left\langle}
\newcommand{\ra}{\right\rangle}
\newcommand{\R}{\mathbb{R}}

\makeatletter
\let\oldabs\abs
\def\abs{\@ifstar{\oldabs}{\oldabs*}}



\numberwithin{equation}{section}

\begin{document}

\title[Spectrum of Chung-Lu random graphs]{Central limit theorem for the principal eigenvalue\\
and eigenvector of Chung-Lu random graphs}

\author[P. Dionigi]{Pierfrancesco Dionigi}
\address{Mathematical Institute, Leiden University, P.O.\ Box 9512, 2300 RA Leiden, The Netherlands.}
\email{p.dionigi@math.leidenuniv.nl}
\author[D. Garlaschelli]{Diego Garlaschelli}
\address{Lorentz Institute for Theoretical Physics, Leiden University, P.O.\  Box 9504, 2300 RA Leiden, The Netherlands
\& IMT School for Advanced Studies, Piazza S.\ Francesco 19, 55100 Lucca, Italy}
\email{garlaschelli@lorentz.leidenuniv.nl}
\author[R.~S.~Hazra]{Rajat Subhra Hazra}
\author[F.\ den Hollander]{Frank den Hollander}
\address{Mathematical Institute, Leiden University, P.O.\ Box 9512, 2300 RA Leiden, The Netherlands}
\email{r.s.hazra@math.leidenuniv.nl}
\email{denholla@math.leidenuniv.nl}

\author[M. Mandjes]{Michel Mandjes} 
\address{Korteweg-de Vries Institute, University of Amsterdam, P.O.\ Box 94248, 1090 GE Amsterdam, The Netherlands}
\email{M.R.H.Mandjes@uva.nl}
\thanks{PD, RSH, FdH and MM are supported by the Netherlands Organisation for Scientific Research (NWO) through the Gravitation-grant NETWORKS-024.002.003. DG is supported by the Dutch Econophysics Foundation (Stichting Econophysics, Leiden, The Netherlands) and by the European Union - Horizon 2020 Program under the scheme ``INFRAIA-01-2018-2019 - Integrating Activities for Advanced Communities'', Grant Agreement n.871042, ``SoBigData++: European Integrated Infrastructure for Social Mining and Big Data Analytics''.}
%
%
%
%
%

\begin{abstract}
A \CLe random graph is an inhomogeneous \ERe random graph in which vertices are assigned average degrees, and pairs of vertices are connected by an edge with a probability that is proportional to the product of their average degrees, independently for different edges. We derive a central limit theorem for the principal eigenvalue and the components of the principal eigenvector of the adjacency matrix of a \CLe random graph. Our derivation requires certain assumptions on the average degrees that guarantee connectivity, sparsity and bounded inhomogeneity of the graph. 
\end{abstract}

\keywords{Chung-Lu random graph; adjacency matrix; principal eigenvalue and eigenvector; central limit theorem}
\subjclass[2000]{60B20,60C05, 60K35 }

\date{\today}


\maketitle
%
%
%
%
%



\section{Introduction, main results and discussion}


\subsection{Introduction}
\label{sec:intro}


The spectral properties of adjacency matrices play an important role in various areas of network science. In the present paper we consider an inhomogeneous version of the \ERe random graph called the \emph{\CLe random graph} and we derive a central limit theorem for the principal eigenvalue and eigenvector of its adjacency matrix.


\subsubsection{Setting}

Recall that the homogeneous \ERe random graph has vertex set $[n] = \{1,\ldots,n\}$, and each edge is present with probability $p$ and absent with probability $1-p$, independently for different edges, where $p \in (0,1)$ may depend on $n$ (in what follows we often suppress the dependence on $n$ from the notation; the reader is however warned that most quantities depend on $n$). The average degree is the same for every vertex and equals $(n-1)p$ when self-loops are not allowed, and $np$ when self-loops are allowed (and are considered to contribute to the degrees of the vertices). In \cite{chungConnectedComponentsRandom2002} the following generalisation of the \ERe random graph is considered, called the \CLe random graph, with the goal to accommodate general degrees. Given a sequence of degrees $\vec{d}_n=(d_i)_{i \in [n]}$, consider the random graph $\Gg_n(\vec{d}_n)$ in which to each pair $i,j$ of vertices an edge is assigned independently with probability $p_{ij}=d_id_j/m_1$, where $m_1=\sum_{i=1}^n d_i$ (for computational simplicity we allow self-loops). Here, the degrees can act as vertex weights. Vertices with low weights are more likely to have less neighbours than vertices with high weights which act as hubs (see \cite[Chapter 6]{vanderhofstadRandomGraphsComplex2017} for a general introduction to generalised random graphs). If $m_\infty^2 \leq m_1$ with $m_\infty = \max_{i\in [n]} d_i$, then $p_{ij} \leq 1$ for all $i,j \in [n]$, and the sequence $\vec{d}_n$ is \emph{graphical}. Note that in $\Gg_n(\vec{d}_n)$ the \emph{expected} degree of vertex $i$ is $d_i$. The classical \ERe random graph (with self-loops) corresponds to $d_i=np$ for all $i \in [n]$.  


\subsubsection{Principal eigenvalue and eigenvector} 

The largest eigenvalue of the adjacency matrix $A$ and its corresponding eigenvector, written as $(\lambda_1,v_1)$, contain important information about the random graph. Several community detection techniques depend on a proper understanding of these quantities \cite{tulino2004random}, \cite{le2018concentration}, \cite{abbe2017community}, which in turn play an important role for various measures of network centrality \cite{martin2014localization}, \cite{newman2006finding}  and for the properties of dynamical processes (such as the spread of an epidemic) taking place on networks \cite{castellano2017,pastorsatorras2018}. For \ERe random graphs, it was shown in \cite{krivelevichLargestEigenvalueSparse2001} that \emph{with high probability} (whp) $\lambda_1$ scales like
\begin{equation}
\label{KS-result}
\lambda_1 \sim \max\{ \sqrt{D_\infty}, np\}, \qquad n \to \infty,
\end{equation}
where $D_\infty$ is the maximum degree. This result was partially extended to $\Gg_n(\vec{d}_n)$ in \cite{chungEigenvaluesRandomPower2003}, and more recently to a class of inhomogeneous \ERe random graphs in \cite{benaych-georgesLargestEigenvaluesSparse2017}, \cite{benaych-georgesSpectralRadiiSparse2021}.  For a related discussion on the behaviour of $(\lambda_1,v_1)$ in real-world networks, see \cite{castellano2017,pastorsatorras2018}. In the present paper we analyse the fluctuations of $(\lambda_1,v_1)$. We will be interested specifically in the case where $\lambda_1$ is detached from the bulk, which for \ERe random graphs occurs when $\lambda_1 \sim np$ whp, and for \CLe random graphs when $\lambda_1 \sim m_2/m_1$, where $m_2=\sum_{i\in [n]} d_i^2$. Note that the quotient $m_2/m_1$ arises from the fact that the average adjacency matrix is rank one and that its only non-zero eigenvalue is $m_2/m_1$. Such rank-one perturbations of a symmetric matrix with independent entries became prominent after the work in \cite{baikPhaseTransitionLargest2005}. Later studies extended this work to finite-rank perturbations \cite{baiCentralLimitTheorems2008}, \cite{benaych-georgesFluctuationsExtremeEigenvalues2011}, \cite{capitaineCentralLimitTheorems2012}, \cite{capitaineLargestEigenvaluesFinite2009}, \cite{feralLargestEigenvalueRank2007}, \cite{feralLargestEigenvaluesSample2009}. \ERe random graphs differ, in the sense that perturbations live on a scale different from $\sqrt{n}$. For \CLe random graphs we assume that $m_2/m_1\to\infty$. 

In the setting of inhomogeneous \ERe random graphs, finite-rank perturbations were studied in \cite{chakrabartyEigenvaluesOutsideBulk2020}. In that paper the connection probability between between $i$ and $j$ is given by $p_{ij} = \varepsilon_n f(i/n, j/n)$, where $f\colon\,[0,1]^2\to [0,1]$ is almost everywhere continuous and of finite rank, $\varepsilon_n \in [0,1]$ and $n\varepsilon_n\gg (\log n)^{8}$. However, for a \CLe random graph with a given degree sequence it is not always possible to construct an almost everywhere continuous function $f$ independent of $n$ such that $\varepsilon_n f(i/n, j/n) = d_i d_j/m_1$. In the present paper we extend the analysis in \cite{chakrabartyEigenvaluesOutsideBulk2020} to \CLe random graphs by focussing on $(\lambda_1,v_1)$. For \ERe random graphs it was shown in \cite{erdosSpectralStatisticsErdos2013}, \cite{erdosSpectralStatisticsErdosRenyi2012} that $\lambda_1$ satisfies a central limit theorem (CLT) and that $v_1$ aligns with the unit vector. These papers extend the seminal work carried out in \cite{furediEigenvaluesRandomSymmetric1981}.


\subsubsection{\CLe random graphs} 

In the present paper, subject to mild assumptions on $\vec{d}_n$, we extend the CLT for $\lambda_1$ from \ERe random graphs to \CLe random graphs, and derive a pointwise CLT for $v_1$ as well. It was shown in \cite{chungEigenvaluesRandomPower2003} that if $m_2/m_1\gg \sqrt{m_\infty}\,(\log n)$, then $\lambda_1 \sim m_2/m_1$ whp, while if  $\sqrt{m_\infty} \gg (m_2/m_1)( \log n)^2$, then $\lambda_1 = m_\infty$ whp. In fact, examples show that a result similar to \eqref{KS-result} does not hold, and that $\lambda_1$ does not scale like $\max\{m_2/m_1, \sqrt{m_{\infty}}\}$. These facts clearly show that the behaviour of $\lambda_1$ is controlled by subtle assumptions on the degree sequence. In what follows we stick to a bounded inhomogeneity regime where $m_2/m_1\asymp m_\infty$.

The behaviour of $v_1$ is interesting and challenging, and is of major interest for applications. One of the crucial properties to look for in eigenvectors is the phenomenon of localization versus delocalization. An eigenvector is called localized when its mass concentrates on a small number of vertices, and delocalized when its mass is approximately uniformly distributed on the vertices. The complete delocalization picture for \ERe random graphs was given in \cite{erdosSpectralStatisticsErdos2013}. In fact, it was proved that $\lambda_1$ is close to the scaled unit vector in the $\ell_\infty$-norm. In the present paper we do not study localization versus delocalization for \CLe random graphs in detail, but we do show that in a certain regime there is strong evidence for delocalization because $v_1$ is close to the scaled unit vector. In \cite[Corollary 1.3 ]{bourgade2017eigenvector} the authors found that the eigenvectors of a Wigner matrix with independent standard Gaussian entries are distributed according to a Haar measure on the orthogonal group, and the coordinates have Gaussian fluctuations after appropriate scaling. Our work shows that the coordinate-wise fluctuations hold as well for the principal eigenvector of the non-centered \CLe adjacency matrix and that they are Gaussian after appropriate centering and scaling.

\subsubsection{Outline}

In Section \ref{sub:model} we define the \CLe random graph, state our assumption on the degree sequence, and formulate two main theorems: a CLT for the largest eigenvalue and a CLT for its associated eigenvector. In Section \ref{sec:discussion} we discuss these theorems and place them in their proper context. Section \ref{sec:proofs-1} contains the proof of the CLT of the eigenvalue and Section \ref{sub:cltvect} studies the properties of the principal eigenvector.


\subsection{Main results}
\label{sub:model}


\subsubsection{Set-up}

Let $\GG_n$ be the set of simple graphs with $n$ vertices. Let $\vec{d}_n=(d_i)_{i \in [n]}$ be a sequence of degrees, such that $d_i\in \mathbb{N}$ for all $i\in [n]$ and abbreviate 
\[
m_k = \sum_{i \in [n]} (d_i)^k, \qquad m_\infty = \max_{i \in [n]} d_i, \qquad  m_0= \min_{i\in [n]} d_i,
\]
Note that these numbers depend on $n$, but in the sequel we will suppress this dependence. For each pair of vertices $i,j$ (not necessarily distinct), we add an edge independently with probability
\begin{equation}
\label{eq:chunglu}
p_{ij}=\frac{d_id_j}{m_1}.
\end{equation}
The resulting random graph, which we denote by $\mathcal{G}_n(\vec{d}_n)$, is referred to in the literature as the Chung-Lu random graph. In \cite{chungConnectedComponentsRandom2002} it was assumed that $m_\infty^2 \le m_1$ to ensure that $p_{ij} \leq 1$. In the present paper we need sharper restrictions. 

\begin{assumption}
\label{ass:deg}
{\rm Throughout the paper we need two assumptions on $\vec{d}_n$ as $n\to\infty$:
\begin{enumerate}
\item[(D1)]
{\bf Connectivity and sparsity:} There exists a $\xi>2$ such that 
\[
(\log n)^{2\xi} \ll m_\infty \ll n^{1/2}.
\]
\item[(D2)]
{\bf Bounded inhomogeneity:} $m_0 \asymp m_\infty$.
\end{enumerate}
}
\hfill$\spadesuit$
\end{assumption}

\noindent
The lower bound in Assumption \ref{ass:deg}(D1) guarantees that the random graph is connected whp and that it is not too sparse. The upper bound is needed in order to have $m_\infty = \smallo(\sqrt{m_1})$, which implies that \eqref{eq:chunglu} is well defined. Assumption \ref{ass:deg}(D2) is a restriction on the inhomogeneity of the model and requires that the smallest and the largest degree are comparable. 

\begin{remark}
{\rm The lower bound on $m_\infty$ in Assumption \ref{ass:deg}(D1) can be seen as an adaptation to our setting of the main condition in \cite[Theorem 2.1]{chungEigenvaluesRandomPower2003} for the asymptotics of $\lambda_1$. As mentioned in Section~\ref{sec:intro}, under the assumption
\[
\frac{m_2}{m_1}\gg \sqrt{m_\infty}\,(\log n)^\xi, 
\]
 \cite{chungEigenvaluesRandomPower2003} shows that $\lambda_1= [1+\smallo(1)]\,m_2/m_1$ whp. It is easy to see that the above condition together with Assumption \ref{ass:deg}(D2) gives the lower bound in Assumption \ref{ass:deg}(D1).} \hfill$\spadesuit$
\end{remark} 

\begin{remark}
{\rm When $m_\infty\ll n^{1/6}$, \cite[Theorem 6.19]{vanderhofstadRandomGraphsComplex2017} implies that our results also hold for the \emph{Generalized Random Graph} (GRG) model with the same average degrees. This model is defined by choosing connection probabilities of the form
\[
p_{ij}=\frac{d_id_j}{m_1+d_id_j},
\] 
and arises in statistical physics as the \emph{canonical ensemble} constrained on the expected degrees, which is also called the \emph{canonical configuration model}. Note that in the above connection probability, $d_i$ plays the role of a hidden variable, or a Lagrange multiplier controlling the expected degree of vertex $i$, but does not in general coincide with the expected degree itself. However, under the assumptions considered here, $d_i$ does coincide with the expected degree asymptotically. The reader can find more about GRG and their use in \cite[Chapter 6]{vanderhofstadRandomGraphsComplex2017}, and about their role in statistical physics in \cite{squartiniAnalyticalMaximumlikelihoodMethod2011}. In the corresponding \emph{microcanonical ensemble} the degrees are not only fixed in their expectation but they take a precise deterministic value, which corresponds to the \emph{microcanonical configuration model}. The two ensembles were found to be \emph{nonequivalent} in the limit as $n\to\infty$ \cite{squartiniBreakingEnsembleEquivalence2015}. This result  was shown to imply a finite difference between the expected values of the largest eigenvalue $\lambda_1$ in the two models \cite{dionigiSpectralSignatureBreaking2021} when the degree sequence was chosen to be constant ($d_i=d$ for all $i \in [n]$). In this latter case the canonical ensemble reduces to the \ERe random graph with $p=d/n$, while the microcanonical ensemble reduces to the $d$-regular random graph model. Although ensemble nonequivalence is not our main focus here, we will briefly relate some of our results to this phenomenon.} \hfill$\spadesuit$
\end{remark} 


\subsubsection{Notation}

Let $A$ be the adjacency matrix of $\Gg_n(\vec{d}_n)$ and $\Exp[A]$ its expectation. The $(i,j)$-th entry of $\Exp[A]$ equals to $p_{ij}$ in \eqref{eq:chunglu}. The $(i,j)$-th entry of $A-\Exp[A]$ is an independent centered Bernoulli random variable with parameter $p_{ij}$. Let $\lambda_1\ge \ldots\ge \lambda_n$ be the eigenvalues of $A$ and let $v_1, \ldots, v_n$ be the corresponding eigenvectors. The vector $e$ will the $n\times 1$ row vector given by
\begin{equation}
\label{eq:def-e}
e=\frac{1}{\sqrt{m_1}}( d_1, \cdots, d_n)^t,
\end{equation}
where $t$ stands for transpose. It is easy to see that $\Exp[A]=ee^t$. 
\begin{definition}
{\rm Following \cite{erdosSpectralStatisticsErdos2013}, we say that an event $\mathcal{E}$ holds \emph{with high probability} (whp) when there exist $\xi>2$ and $\nu>0$ such that}
\begin{equation}
\label{whp}
\Pro(\mathcal{E}) \leq \ee^{-\nu (\log n)^\xi}.
\end{equation}
\end{definition}

\noindent
Note that this is different from the classical notion of whp, because it comes with a specific rate. We write $\weak$ to denote weak convergence as $n\to\infty$, and use the symbols $\smallo,\bigo$ to denote asymptotic order for sequences of real numbers. 


\subsubsection{CLT for the principal eigenvalue}

Our first theorem identifies two terms in the expectation of the largest eigenvalue, and shows that the largest eigenvalue follows a central limit theorem.

\begin{theorem}
\label{thm:CLTlambda}
Under Assumption \ref{ass:deg}, the following hold:
\begin{itemize}
\item[{\rm (I)}]
\[
\Exp[\lambda_1] = \frac{m_2}{m_1} + \frac{m_1m_3}{m_2^2} +\smallo(1), \qquad n\to\infty.
\]
\item[{\rm (II)}]
\[
\frac{m_2}{m_1}\left(\frac{\lambda_1-\Exp[\lambda_1]}{\sigma_1}\right) \weak \mathcal{N}(0,2), \qquad n \to \infty,
\]
where 
\begin{align*}
\sigma_1^2 = \sum_{i,j} (p_{ij})^3(1-p_{ij}) \sim \frac{m_3^2}{m_1^3},
\qquad n \to \infty.
\end{align*}
\end{itemize}
\end{theorem}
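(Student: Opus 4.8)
\emph{Proof proposal.}

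The plan is to write $A = ee^t + H$ with $H := A - \Exp[A]$ a symmetric random matrix whose entries $H_{ij}=H_{ji}$, $i\le j$, are independent and centered with variance $q_{ij}:=p_{ij}(1-p_{ij})$, and to exploit two structural facts: $ee^t$ is rank one with single non-zero eigenvalue $\mu := \|e\|^2 = m_2/m_1$, and $e_ie_j=p_{ij}$. I would start from the spectral-norm bound $\|H\|\le C\sqrt{m_\infty}$, which holds whp (in the sense of~\eqref{whp}) under Assumption~\ref{ass:deg}(D1); since (D1)--(D2) force $\sqrt{m_\infty}=\smallo(\mu)$ (using $\mu=m_2/m_1\asymp m_\infty$), Weyl's inequality and rank-one interlacing give $\lambda_2\le\|H\|$ and $|\lambda_1-\mu|\le\|H\|$ deterministically, while the Rayleigh quotient of $A$ at $e$ equals $\mu+a_1/\mu$ with $a_1:=e^tHe$, $|a_1|\le\mu\|H\|$, so $\lambda_1>\|H\|$ whp. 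Hence $\lambda_1\notin\operatorname{spec}(H)$ whp, and by the matrix determinant lemma $\lambda_1$ is the \emph{unique} solution in $(\|H\|,\infty)$ of $G(\lambda):=e^t(\lambda I-H)^{-1}e=1$, the function $G$ being strictly decreasing there.

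Expanding $G(\lambda)=\sum_{k\ge0}a_k\lambda^{-(k+1)}$ with $a_k:=e^tH^ke$ (valid for $\lambda>\|H\|$, hence whp at $\lambda_1$), the equation $G(\lambda_1)=1$ reads $\lambda_1=\mu+\sum_{k\ge1}a_k\lambda_1^{-(k+1)}$. A one-step bootstrap, using $|\lambda_1-\mu|\le\|H\|$, $|a_k|\le\mu\|H\|^k$ and a Bernstein bound for the centered sum $a_1$, refines this to $\lambda_1-\mu=\bigo(1)$ whp and gives
\[
\lambda_1 = \mu + \frac{a_1}{\mu} + \frac{a_2}{\mu^2} + \mathcal R,
\]
where $\mathcal R$ gathers the $k\ge3$ terms and the corrections from replacing $\lambda_1$ by $\mu$ in the denominators. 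The relevant moment estimates, transparent via $e_ie_j=p_{ij}$ and (D2) (which gives $m_k\asymp nm_\infty^k$), are: $\Exp[a_1]=0$ and, since $e_i^2e_j^2q_{ij}=p_{ij}^3(1-p_{ij})$, $\Var(a_1)=2\sum_{i,j}p_{ij}^3(1-p_{ij})+\bigo(\sum_ip_{ii}^3)=2\sigma_1^2(1+\smallo(1))$; $\Exp[a_2]=\sum_{i,j}e_i^2q_{ij}=m_3/m_1-m_2m_4/m_1^3=m_3/m_1+\smallo(\mu^2)$, with $\Var(a_2)=\smallo(\mu^2\sigma_1^2)$; and, splitting on $\{\|H\|\le C\sqrt{m_\infty}\}$ (on whose complement $0\le\lambda_1\le n$ together with the rate in~\eqref{whp} makes everything negligible), $\Exp[\mathcal R]=\smallo(1)$ and $\Var(\mathcal R)=\smallo(\sigma_1^2/\mu^2)$, obtained from the geometric tail $\sum_{k\ge3}\|H\|^k/\mu^k=\bigo(m_\infty^{-3/2})$, a variance bound on the leading remainder $a_3/\mu^4$, and Cauchy--Schwarz for the cross terms (using that $a_1$ is centered with standard deviation $\asymp\sigma_1\ll\mu$). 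Taking expectations in the displayed identity yields part~(I): $\Exp[\lambda_1]=\mu+\Exp[a_2]/\mu^2+\smallo(1)=m_2/m_1+m_1m_3/m_2^2+\smallo(1)$.

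For part~(II), subtracting $\Exp[\lambda_1]$ and multiplying by $\mu/\sigma_1$ gives
\[
\frac{\mu(\lambda_1-\Exp[\lambda_1])}{\sigma_1}=\frac{a_1}{\sigma_1}+\frac{a_2-\Exp[a_2]}{\mu\sigma_1}+\frac{\mu(\mathcal R-\Exp[\mathcal R])}{\sigma_1},
\]
where the last two terms have variance $\smallo(1)$ and hence vanish in probability. It remains to prove $a_1/\sigma_1\weak\mathcal N(0,2)$: writing $a_1=\sum_{i\le j}c_{ij}H_{ij}$ with $c_{ii}=e_i^2$ and $c_{ij}=2e_ie_j$ for $i<j$, this is a sum of independent centered variables with $\Var(a_1)=2\sigma_1^2(1+\smallo(1))$ and $\max_{i,j}|c_{ij}H_{ij}|\le2\max_ie_i^2\le2m_\infty^2/m_1=\smallo(\sigma_1)$, so the Lindeberg condition holds trivially; the Lindeberg--Feller CLT and Slutsky's lemma complete the proof. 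I expect the main obstacle to be the moment bookkeeping for $\mathcal R$: showing it negligible not only in expectation (needed for~(I)) but, more delicately, in fluctuation on the fine scale $\sigma_1/\mu\asymp\sqrt{m_\infty/n}$ (needed for~(II)). This forces one to weigh the crude deterministic bound $\|H\|\lesssim\sqrt{m_\infty}$, which handles worst-case behaviour and the rare event in~\eqref{whp}, against sharper variance estimates for $a_2$, $a_3$ and the denominator-shift terms, all of which rest on $e_ie_j=p_{ij}$ and on the bounded-inhomogeneity Assumption~\ref{ass:deg}(D2).
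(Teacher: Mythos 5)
Your proposal is correct and follows essentially the same route as the paper's proof: decompose $A=ee^t+H$, observe that whp $\lambda_1$ is characterized by the self-consistent equation $\lambda_1=\sum_{k\ge 0}\la e,H^k e\ra/\lambda_1^k$, expand to isolate the $k=0,1,2$ terms, and prove a CLT for $a_1=\la e,He\ra$ via an independent-sum CLT (you use Lindeberg--Feller, the paper uses Lyapunov; these are interchangeable here). Your moment computations for $\Var(a_1)$, $\Exp[a_2]$, and the argument that the rare event in \eqref{whp} makes the contribution from $\{\|H\|>C\sqrt{m_\infty}\}$ negligible all match the paper.

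The one genuine organizational difference is how the two proofs separate the deterministic correction from the stochastic remainder. The paper introduces a fixed point $x_0$ of the truncated equation $x=\sum_{k\le L}\Exp[\la e,H^k e\ra]/x^k$ (its Section 2.3), proves $x_0/\mu\to 1$, and then shows $\lambda_1 = x_0 + a_1/\mu + R_n$ with $R_n=\smallo_\Pro(\sigma_1/\mu)$ and $\Exp[|R_n|]=\smallo(\sigma_1/\mu)$; since $x_0$ is deterministic it drops out exactly when subtracting $\Exp[\lambda_1]$, which streamlines the CLT step. You instead expand directly around $\mu$, writing $\lambda_1 = \mu + a_1/\mu + a_2/\mu^2 + \mathcal R$, and control $\Exp[\mathcal R]$ (for part (I)) and $\Var(\mathcal R)$, $\Var(a_2)$ (for part (II)). This is logically equivalent but requires one extra variance estimate (on $a_2-\Exp[a_2]$) that the paper's fixed-point bookkeeping absorbs automatically; conversely you avoid having to establish existence and location of $x_0$. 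Both versions ultimately rest on the same concentration input (the paper's Lemma 2.4, adapted from Erd\H{o}s--Knowles--Yau--Yin) for the powers $\la e,H^ke\ra$, and on the spectral-norm bound $\|H\|\lesssim\sqrt{m_\infty}$ whp, so the technical work in making "$\Var(\mathcal R)=\smallo(\sigma_1^2/\mu^2)$'' precise is of the same nature and difficulty as making $R_n=\smallo_\Pro(\sigma_1/\mu)$ precise in the paper.

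One small slip to fix before expanding the writeup: expanding $G(\lambda)=\sum_{k\ge 0}a_k\lambda^{-(k+1)}$ and setting $G(\lambda_1)=1$ gives $\lambda_1=\sum_{k\ge 0}a_k\lambda_1^{-k}=\mu+\sum_{k\ge 1}a_k\lambda_1^{-k}$, not $\mu+\sum_{k\ge 1}a_k\lambda_1^{-(k+1)}$ as you wrote; similarly the "leading remainder" in $\mathcal R$ is $a_3/\mu^3$, not $a_3/\mu^4$. These are typos rather than errors in the plan, since your subsequent displayed identity $\lambda_1=\mu+a_1/\mu+a_2/\mu^2+\mathcal R$ is the correct one.
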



\subsubsection{CLT for the principal eigenvector}

Our second theorem shows that the principal eigenvector is parallel to the normalised degree vector, and is close to this vector in $\ell^\infty$-norm. It also identifies the expected value of the components of the principal eigenvector, and shows that the components follow a central limit theorem.

\begin{theorem}
\label{thm:CLTv}
Let $\tilde{e} = e\sqrt{m_1/m_2}$ be the $\ell^2$-nomalized degree vector. Let $v_1$ be the eigenvector corresponding to $\lambda_1$ and let $v_1(i)$ denote the $i$-th coordinate of $v_1$.  Under Assumption \ref{ass:deg}, the following hold:
\begin{itemize}
\item[{\rm (I)}]
$\langle v_1,\tilde{e} \rangle = 1+\smallo(1)$ as $n\to\infty$ whp.
\vskip10pt
\item[{\rm (II)}]
$\|v_1- \tilde{e}\|_{\infty} \leq \bigo\lep\frac{(\log n)^{\xi}}{\sqrt{n m_{\infty}}}\rip$ as $n\to\infty$ whp.
\vskip10pt
\item[{\rm (III)}]
$\Exp[v_1(i)]=\frac{d_i}{\sqrt{m_2}}+\bigo\lep\frac{(\log n)^{2\xi}}{\sqrt{m_2}}\rip$ as $n\to\infty$. 
\end{itemize}
Moreover, if the lower bound in Assumption \ref{ass:deg}(D1) is strengthened to $(\log n)^{4\xi} \ll m_\infty$, then for all $i \in [n]$,
\begin{itemize}
\item[{\rm(IV)}]
\[
\frac{m_2}{m_1}\left(\frac{v_1(i)-d_i/\sqrt{m_2}}{s_1(i)}\right) 
\weak \mathcal{N}(0,1), \qquad n \to \infty, 
\]
where \[s_1^2(i)=\sum_j d_j^2 p_{ij}(1-p_{ij}) \sim d_i\frac{m_3}{m_1},\qquad n \to \infty.\]
\end{itemize}
\end{theorem}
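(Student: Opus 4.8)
\medskip
\noindent\emph{Proof strategy.}
The plan is to use the rank-one decomposition $A=ee^t+X$, where $X:=A-\Exp[A]$ has independent (up to symmetry) centred entries with $\Var(X_{ij})=p_{ij}(1-p_{ij})$, together with a Neumann expansion of the resolvent $R(z):=(zI-X)^{-1}$ around $z=\lambda_1$. First I would work on the event that $\|X\|\le C\sqrt{m_\infty}$, which holds whp by the operator-norm bound already established in the proof of Theorem~\ref{thm:CLTlambda}; on this event Assumptions (D1)--(D2) give $\lambda_1\asymp m_2/m_1\asymp m_\infty\gg\|X\|$, so $R(\lambda_1)=\sum_{k\ge0}\lambda_1^{-(k+1)}X^k$ converges. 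The eigenvector equation $Av_1=\lambda_1v_1$ rearranges to $v_1=\langle e,v_1\rangle\,R(\lambda_1)\,e$, hence (recall $e_i=d_i/\sqrt{m_1}$)
\[
v_1(i)=\langle e,v_1\rangle\Bigl(\frac{e_i}{\lambda_1}+\frac{[Xe]_i}{\lambda_1^{2}}+\mathcal{R}_i\Bigr),\qquad
\mathcal{R}_i:=\sum_{k\ge2}\frac{[X^ke]_i}{\lambda_1^{\,k+1}}=\frac{1}{\lambda_1^{2}}\,[R(\lambda_1)X^2e]_i .
\]
The three steps are then: (a) the $k=0$ term equals $d_i/\sqrt{m_2}$ up to an error negligible on the fluctuation scale; (b) the $k=1$ term is the Gaussian contribution; (c) the prefactor $\langle e,v_1\rangle$ and the remainder $\mathcal{R}_i$ contribute only lower-order corrections.

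\medskip
\noindent\emph{Leading term and Gaussian term.}
For (a), Theorem~\ref{thm:CLTv}(I) gives $\langle e,v_1\rangle=\sqrt{m_2/m_1}\,(1+\smallo(1))$ whp and Theorem~\ref{thm:CLTlambda} gives $\lambda_1=(m_2/m_1)(1+\smallo(1))$ whp, so $\langle e,v_1\rangle e_i/\lambda_1=(d_i/\sqrt{m_2})(1+\smallo(1))$; keeping track of the whp rates in those theorems (and using the hypothesis $(\log n)^{2\xi}\ll m_\infty$) shows that its deviation from $d_i/\sqrt{m_2}$ is $\smallo$ of the standard deviation of $v_1(i)$, which is of order $(nm_\infty)^{-1/2}$. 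For (b), $[Xe]_i=\sum_{j\in[n]}e_jX_{ij}$ is a sum of independent, centred, bounded random variables — the $i$-th row of $X$ — with $\Var([Xe]_i)=\sum_j e_j^2p_{ij}(1-p_{ij})=s_1^2(i)/m_1$; since $|e_jX_{ij}|\le m_\infty/\sqrt{m_1}$, which is $\smallo(s_1(i)/\sqrt{m_1})$ because $s_1^2(i)\sim d_im_3/m_1\asymp m_\infty^3$, the Lindeberg condition holds trivially, so the Lindeberg--Feller CLT yields $\sqrt{m_1}\,[Xe]_i/s_1(i)\weak\mathcal{N}(0,1)$. Multiplying by $\langle e,v_1\rangle/\lambda_1^{2}$, which equals its deterministic leading value times $1+\smallo(1)$ whp, and invoking Slutsky's theorem produces the centred and scaled Gaussian of part~(IV); the $\smallo(1)$ error in the prefactor is harmless since it multiplies a quantity of stochastic order one.

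\medskip
\noindent\emph{The remainder — the crux.}
The remaining step is to show that $\langle e,v_1\rangle\,\mathcal{R}_i$ is negligible on the fluctuation scale. The crude deterministic bound $|\mathcal{R}_i|\le\|e\|\,\lambda_1^{-1}\sum_{k\ge2}(\|X\|/\lambda_1)^k\asymp m_\infty^{-1}$ is \emph{not} good enough, since $m_\infty^{-1}$ is much larger than $(nm_\infty)^{-1/2}$, so one must exploit cancellations. I would split $[X^2e]_i=e_i\sum_aX_{ia}^2+\sum_aX_{ia}\sum_{b\ne i}X_{ab}e_b$ into its backtracking and non-backtracking parts. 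The backtracking part concentrates around $e_i\sum_ap_{ia}(1-p_{ia})\asymp e_id_i$ and, after multiplication by $\langle e,v_1\rangle/\lambda_1^{3}$, contributes only a deterministic bias of order $(nm_\infty^{2})^{-1/2}$, smaller by a factor $m_\infty^{-1/2}$ than the fluctuations; this is exactly the contribution captured by the $\bigo((\log n)^{2\xi}/\sqrt{m_2})$ error term of part~(III). The non-backtracking part, and all terms with $k\ge3$, I would control by second-moment estimates: expanding $\Exp\!\bigl[[X^ke]_i[X^\ell e]_i\bigr]$ and counting index coincidences along the associated walks shows that the fluctuation of $\langle e,v_1\rangle\,\mathcal{R}_i$ is smaller than the $k=1$ fluctuation by a factor of order $m_\infty^{-1/2}$, with the sum over $k$ summed using the a priori bound $\|R(\lambda_1)\|=\bigo(1/\lambda_1)$ valid on the good event. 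A leave-one-out argument — resampling the $i$-th row of $X$ is a rank-$\le2$ perturbation, which by Weyl's inequality and the Davis--Kahan theorem moves $\lambda_1$ and $v_1$ only by a relatively negligible amount, the spectral gap of $A$ being of order $m_\infty$ — makes explicit that $[Xe]_i$ is asymptotically independent of $(\lambda_1,\langle e,v_1\rangle)$, though this is not strictly needed once Slutsky's theorem is used as above.

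\medskip
\noindent\emph{Assembly.}
Combining the three steps, $v_1(i)-d_i/\sqrt{m_2}$ equals $\langle e,v_1\rangle[Xe]_i/\lambda_1^2$ plus a term that is, in probability, $\smallo$ of the fluctuation scale, and the first term, suitably normalised, converges weakly to $\mathcal{N}(0,1)$ by the Lindeberg--Feller CLT. The strengthened lower bound $(\log n)^{4\xi}\ll m_\infty$ enters precisely here: it forces the bias $\Exp[v_1(i)]-d_i/\sqrt{m_2}=\bigo((\log n)^{2\xi}/\sqrt{m_2})$ from part~(III) to be $\smallo$ of the standard deviation of $v_1(i)$, so that recentring at the deterministic value $d_i/\sqrt{m_2}$ rather than at $\Exp[v_1(i)]$ does not change the limit. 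I expect the main obstacle to be the remainder estimate: the naive operator-norm bound loses a factor of order $\sqrt{n/m_\infty}$, so one genuinely has to track the walk combinatorics of $X^ke$ — separating backtracking from non-backtracking contributions — both to see the required cancellation and to identify the deterministic correction that matches part~(III).
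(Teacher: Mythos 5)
Your high-level framework coincides with the paper's: both write the eigenvector equation as $v_1=\langle e,v_1\rangle\,(\lambda_1 I-H)^{-1}e$ on the whp event where $\|H\|<\lambda_1$, Neumann-expand the resolvent, and separate the series into the $k=0$ term (giving the centering $d_i/\sqrt{m_2}$), the $k=1$ term $\propto(He)(i)=\sum_j h_{ij}d_j$ (the Lindeberg CLT contribution), and a remainder from $k\ge2$. Your Lindeberg computation for the $k=1$ term, the variance $\sum_j e_j^2p_{ij}(1-p_{ij})=s_1^2(i)/m_1$, the use of Slutsky to absorb the random prefactor $K/\lambda_1^2=(m_1/m_2)^{3/2}(1+\smallo_{\Pro}(1))$, and your identification of where the strengthened hypothesis $(\log n)^{4\xi}\ll m_\infty$ enters (to dominate the $\bigo((\log n)^{2\xi}/\sqrt{m_2})$ bias by the fluctuation scale $(nm_\infty)^{-1/2}$, since their ratio is $(\log n)^{2\xi}/\sqrt{m_\infty}$) all match the paper.

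The genuine difference is how the $k\ge2$ remainder is controlled. You correctly observe that the naive operator-norm bound $|\mathcal R_i|\lesssim m_\infty^{-1}$ is too weak, and propose a second-moment, walk-counting argument: decompose $[H^2e]_i$ into backtracking and non-backtracking pieces, estimate $\Exp\bigl[[H^ke]_i[H^\ell e]_i\bigr]$ by counting index coincidences, and sum over $k$ using the a priori resolvent bound. The paper instead invokes a pointwise whp concentration bound, Lemma~\ref{lem:conccomponent}, namely $|H^ke(i)|\le\frac{m_\infty}{\sqrt{m_1}}\bigl((\log n)^\xi\sqrt{m_\infty}\bigr)^k$ for all $1\le k\le L$ whp, which is a componentwise version of Lemma~\ref{lemma:exppowers} and is imported from Erd\H{o}s--Knowles--Yau--Yin (the paper explicitly skips its proof). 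Your route is more elementary and self-contained in spirit, and your scale bookkeeping (the $k=2$ bias is $\bigo((nm_\infty^2)^{-1/2})$, the remainder fluctuation is smaller than the $k=1$ fluctuation by $m_\infty^{-1/2}$) agrees with what Lemma~\ref{lem:conccomponent} delivers after substitution. What each buys: the paper's concentration lemma gives a clean whp (exponential-rate) pointwise bound uniformly over $1\le k\le L$, which is what the paper also needs for parts (II) and (III), not only (IV); your second-moment/Chebyshev route gives only $\smallo_{\Pro}$ control, which suffices for the CLT in (IV) but would have to be strengthened (via higher moments or by reproducing the EKYY argument) to obtain the whp $\ell^\infty$-bound in (II) or the exponential tail needed to control $\Exp[v_1(i)\mathbf 1_{\mathcal E^c}]$ in (III). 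The one step you leave genuinely open — and flag yourself as the likely obstacle — is carrying out the walk combinatorics for $\Exp\bigl[[H^ke]_i[H^\ell e]_i\bigr]$ with the inhomogeneous variance profile $p_{ij}(1-p_{ij})$ uniformly for $2\le k,\ell\le L$; this is precisely what Lemma~\ref{lem:conccomponent} (via Lemma~\ref{lemma:exppowers}) packages for the paper, so that is the missing technical content rather than a flaw in strategy.
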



\subsection{Discussion}
\label{sec:discussion}

We place the theorems in their proper context.

\medskip\noindent
{\bf 1.}
Theorems~\ref{thm:CLTlambda}--\ref{thm:CLTv} provide a CLT for $\lambda_1,v_1$. We note that $m_2/m_1$ is the leading order term in the expansion of $\lambda_1$, while $m_1m_3/m_2^2$ is a correction term. We observe that Theorem~\ref{thm:CLTlambda}(I) does not follow from the results in \cite{chungEigenvaluesRandomPower2003}, because the largest eigenvalue need not be uniformly integrable and also the second order expansion is not considered there. We also note that in Theorem~\ref{thm:CLTlambda}(II) the centering of the largest eigenvalue, $\Exp[\lambda_1]$, cannot be replaced by its asymptotic value as the error term is not compatible with the required variance. 

\medskip\noindent
{\bf 2.}
The lower bound in Assumption \ref{ass:deg}(D1) is needed to ensure that the random graph is connected, and is crucial because the largest eigenvalue is very sensitive to connectivity properties. Assumption \ref{ass:deg}(D2) is needed to control the inhomogeneity of the random graph. It plays a crucial role in deriving concentration bounds  on the central moments $\langle e, (A-\Exp[A])^k e\rangle$, $k \in \mathbb{N}$, with the help of a result from \cite{erdosSpectralStatisticsErdos2013}. Further refinements may come from different tools, such as the non-backtracking matrices used in \cite{benaych-georgesLargestEigenvaluesSparse2017}, \cite{benaych-georgesSpectralRadiiSparse2021}. While Assumption \ref{ass:deg}(D1) appears to be close to optimal, Assumption \ref{ass:deg}(D2) is far from optimal. It would be interesting to allow for empirical degree distributions that converge to a limiting degree distribrution with a power law tail.  

\medskip\noindent
{\bf 3.}
 As already noted, if the expected degrees are all equal to each other, i.e., $d_i=d$ for all $i \in [n]$, then the \CLe random graph, or canonical configuration model, reduces to the homogeneous \ERe random graph with $p=d/n$, while the corresponding microcanonical configuration model reduces to the homogeneous $d$-regular random graph model (here, all models allow for self-loops). This implies that, for the homogeneous \ERe random graph with connection probability $p \gg (\log n)^{2\xi}/n$, $\xi>2$, Theorem~\ref{thm:CLTlambda}(I) reduces to
\[
\mathbb{E} [\lambda_1] = np+1+\smallo(1), \qquad n\to\infty,
\]
while Theorem \ref{thm:CLTlambda}(II) reduces to 
\[
\frac{1}{\sqrt{p}} \left(\lambda_1- \Exp[\lambda_1]\right) \weak \mathcal{N}(0, 2), \qquad n \to \infty.
\]
Both these properties were derived in \cite{erdosSpectralStatisticsErdosRenyi2012} for homogeneous \ERe random graphs and also for rank-1 perturbations of Wigner matrices. In \cite{dionigiSpectralSignatureBreaking2021}, the fact that $\Exp[\lambda_1]$ in the canonical ensemble differs by a finite amount from the corresponding expected value (here, $d=np$) in the microcanonical ensemble ($d$-regular random graph) was shown to be a signature of ensemble nonequivalence.

\medskip\noindent
{\bf 4.}
In case $d_i=d$ for all $i \in [n]$, Theorem \ref{thm:CLTv}(III) reduces to the following CLT, which was not covered by \cite{erdosSpectralStatisticsErdosRenyi2012} and \cite{dionigiSpectralSignatureBreaking2021}.

\begin{corollary}
\label{CLTeigvec_erdos}
For the \ERe random graph with $(\log n)^{4\xi}/n \ll p \ll n^{-1/2}$ for some $\xi>2$,
\[
n\sqrt{\frac{p}{1-p}}\lep v_1(i)-\frac{1}{\sqrt{n}}\rip \weak \mathcal{N}(0,1), \qquad n \to \infty.
\]
\end{corollary}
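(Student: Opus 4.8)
The plan is to read Corollary~\ref{CLTeigvec_erdos} off from Theorem~\ref{thm:CLTv}(IV) by taking the degree sequence constant, and then to unpack what that specialization says. Put $d_i = d := np$ for every $i \in [n]$. Then \eqref{eq:chunglu} gives $p_{ij} = d^2/m_1 = d/n = p$, so $\Gg_n(\vec d_n)$ is precisely the homogeneous \ERe random graph with edge probability $p$ (with self-loops), and its principal eigenvector is the $v_1$ of the corollary. First I would verify that the hypotheses match. All degrees being equal, $m_0 = m_\infty = d = np$, so Assumption~\ref{ass:deg}(D2) holds trivially; and the strengthened lower bound $(\log n)^{4\xi}\ll m_\infty$ of Theorem~\ref{thm:CLTv}, together with the upper bound $m_\infty \ll n^{1/2}$ from Assumption~\ref{ass:deg}(D1), becomes, after dividing through by $n$, exactly the assumed range $(\log n)^{4\xi}/n \ll p \ll n^{-1/2}$.

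The substance is then the variance computation underlying Theorem~\ref{thm:CLTv}(IV), specialized to this case. Writing $A = \Exp[A] + E$ with $\Exp[A] = ee^t$ and $\tilde e = e\sqrt{m_1/m_2} = \mathbf 1/\sqrt n$, first-order eigenvector perturbation gives $v_1(i) - 1/\sqrt n = \lambda_1^{-1}(E\tilde e)(i) + (\text{lower order})$, where $(E\tilde e)(i) = n^{-1/2}\sum_j (A_{ij} - p)$ is $n^{-1/2}$ times a centered sum of $n$ independent Bernoulli variables, hence has variance $p(1-p)$ and satisfies a Lyapunov CLT. Combining this with $\lambda_1 = [1+\smallo(1)]\,m_2/m_1 = [1+\smallo(1)]\,np$ yields $\Var(v_1(i) - 1/\sqrt n) = [1+\smallo(1)]\,(1-p)/(n^2 p)$ and the Gaussian limit, i.e.\ $n\sqrt{p/(1-p)}\,(v_1(i) - 1/\sqrt n) \weak \mathcal N(0,1)$, which is the claim. (Equivalently, one substitutes $m_1 = nd$, $m_2 = nd^2$, $m_3 = nd^3$ and $s_1^2(i) = \sum_j d^2\,p(1-p) = nd^2 p(1-p)\sim d^3$ into the standardization of Theorem~\ref{thm:CLTv}(IV) and simplifies using $d = np$.) The only genuinely delicate point --- making the ``lower order'' error in the perturbation expansion negligible on the scale $1/(n\sqrt p)$, which is precisely what forces the strengthened hypothesis $(\log n)^{4\xi}\ll m_\infty$ --- is exactly what the proof of Theorem~\ref{thm:CLTv}(IV) in Section~\ref{sub:cltvect} supplies, via the $\ell^\infty$ bound of Theorem~\ref{thm:CLTv}(II) and the moment and concentration estimates imported from \cite{erdosSpectralStatisticsErdos2013}; so there is nothing new to prove here beyond the bookkeeping above.

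The one formal caveat --- and the closest thing to an obstacle, though a mild one --- is that Theorem~\ref{thm:CLTv} is stated for integer degrees, whereas $np$ need not be an integer. This is harmless: one may restrict to pairs $(n,p)$ with $np\in\mathbb N$, which does not shrink the admissible range of $p$; or one replaces $d=np$ by a nearest integer and notes that the induced change in $m_1,m_2,m_3$ (hence in the centering $1/\sqrt n$ and the scale $n\sqrt{p/(1-p)}$) is a relative $\smallo(1)$ perturbation, which does not affect the limit. Either way, Corollary~\ref{CLTeigvec_erdos} follows at once.
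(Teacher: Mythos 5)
Your main argument is the intended one --- specialize Theorem~\ref{thm:CLTv}(IV) to the constant sequence $d_i = d = np$, check that $m_0 = m_\infty = d$ makes (D2) trivial and that (D1) with the strengthened lower bound becomes exactly $(\log n)^{4\xi}/n \ll p \ll n^{-1/2}$, and read off the limit --- and your direct first-order perturbation computation, $\Var\bigl(v_1(i)-1/\sqrt{n}\bigr) \sim (1-p)/(n^2p)$, does produce the normalization $n\sqrt{p/(1-p)}$ appearing in the corollary. The integer-degree caveat is a fair observation and your fix is standard and adequate.

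However, your parenthetical ``equivalently, one substitutes \dots into the standardization of Theorem~\ref{thm:CLTv}(IV)'' is not actually equivalent, and if you had carried it out you would have hit a contradiction. With $m_1 = nd$, $m_2 = nd^2$, $m_3 = nd^3$ and $s_1^2(i) = nd^2p(1-p)$, the normalization $\frac{m_2/m_1}{s_1(i)}$ as written in the theorem statement evaluates to $\frac{d}{d\sqrt{np(1-p)}} = \frac{1}{\sqrt{np(1-p)}}$, which differs from the corollary's $n\sqrt{p/(1-p)}$ by the large factor $n^{3/2}p$. The discrepancy is in the paper, not in your main calculation: the normalization in Theorem~\ref{thm:CLTv}(IV) as stated in Section~1 is inconsistent (by a factor $\sqrt{m_2}$) with the version proved at the end of Section~3, where the statement reads $\sqrt{m_2^3/(d_i m_3 m_1)}\bigl(v_1(i)-d_i/\sqrt{m_2}\bigr)\weak\mathcal N(0,1)$. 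Only the latter matches the variance $\frac{m_1 d_i m_3}{m_2^3}$ that comes out of the expansion $v_1(i)-\tilde e(i) \approx \frac{m_1}{m_2^{3/2}}\sum_j h_{ij}d_j$, and only the latter reduces to $n\sqrt{p}$ in the homogeneous case, consistent both with the corollary and with the $\ell^\infty$ bound $\|v_1-\tilde e\|_\infty = \bigo\bigl((\log n)^\xi/\sqrt{nm_\infty}\bigr)$, which in the ER case is of order $1/(n\sqrt p)$ up to logs. So the corollary follows from the Section~3 form of the CLT; your main derivation already gives this, but you should not present the substitution into the Section~1 statement as an equivalent route, since taken literally it yields the wrong scaling.
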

\noindent Note that, in the corresponding microcanonical ensemble ($d$-regular random graph), $v_1$ coincides with the constant vector where $v_1(i)=1/\sqrt{n}$ for all $i \in [n]$. Therefore in the canonical ensemble each coordinate $v_1(i)$ has Gaussian fluctuations around the corresponding deterministic value for the microcanonical ensemble. This behaviour is similar to the degrees having, in the canonical configuration model, either Gaussian (in the dense setting) or Poisson (in the sparse setting) fluctuations around the corresponding deterministic degrees for the microcanonical configuration model \cite{garlaschelliCovarianceStructureBreaking2018}.

\medskip\noindent
{\bf 5.}
In \cite{CHdHS2021} the empirical spectral distribution of $A$ was considered under the assumption that 
\[
(m_\infty)^2/m_1 \ll 1 \ll  n(m_\infty)^2/m_1,
\] 
which is weaker than Assumption \ref{ass:deg}. It was shown that if $\mu_n \weak \mu$ with $\mu_n = n^{-1} \sum_{i=1}^n \delta_{d_i/m_\infty}$ and $\mu$ some probability distribution on $\R$, then 
\[
\mathrm{ESD}\left(\frac{A}{\sqrt{n(m_\infty)^2/m_1}}\right) \weak \mu \boxtimes \mu_\mathrm{sc}
\]  
with $\mu_{\mathrm{sc}}$ the Wigner semicircle law and $\boxtimes$ the free multiplicative convolution. Since $\mu\boxtimes \mu_{\mathrm{sc}}$ is compactly supported, this shows that the scaling for the largest eigenvalue and the spectral distribution are different.


\section{Proof of Theorem \ref{thm:CLTlambda}}
\label{sec:proofs-1}

In what follows we use the well-known method of writing the largest eigenvalue of a matrix as a rank-1 perturbation of the centered matrix. This method was previously successfully employed in \cite{furediEigenvaluesRandomSymmetric1981, pecheLargestEigenvalueSmall2006, erdosSpectralStatisticsErdos2013}.

Given the adjacency matrix $A$ of our graph $G$, we can write $A=H+\Exp[A]$ with $H=A-\Exp[A]$. Let $v_1$ be the eigenvector associated with the eigenvalue $\lambda_1$. Then
\[
A v_1 =\lambda_1 v_1, \quad
(H+\Exp[ A])v_1 =\lambda_1v_1, \quad
(\lambda_1 I - H) v_1 = \Exp[A] v_1.
\]
Using that $\Exp[A]=ee^t$, we have $(\lambda_1 I - H) v_1=  \la e,  v_1\ra  e$, where $I$ is the $n \times n$ identity matrix. It follows that if $\lambda_1$ is not an eigenvalue of $H$, then the matrix $(\lambda_1 I - H)$ is invertible, and so  
\begin{equation}\label{eq:eigenvector}
v_1=  \la e,  v_1\ra (\lambda_1 I - H)^{-1} e.
\end{equation}
Eliminating the eigenvector $v_1$ from the above equation, we get
\[
1 =  \la e,   (\lambda_1 I - H)^{-1} e\ra ,
\]
where we use that $\la e,  v_1\ra  \neq 0$ (since $\lambda_1$ is not an eigenvalue of $H$). Note that this can be expressed as
\begin{equation} 
\label{eigenvalue2}
\lambda_1 =  \la e,   \left(I- \frac{H}{\lambda_1}\right)^{-1} e\ra  
= \sum_{k=0}^{\infty}  \left\langle e, \left(\frac{H}{\lambda_1}\right)^{k}e\right\rangle \quad whp,
\end{equation}
where the validity of the series expansion will be an immediate consequence of Lemma \ref{lem:concentrationlambda} below.

Section \ref{sub:spectral_H} derives bounds on the spectral norm of $H$. Section~\ref{sub:EA} analyses the expansion in \eqref{eigenvalue2} and prove the scaling of $\Exp[\lambda_1]$. Section \ref{sub:cltlambda} is devoted to the proof of the CLT for $\lambda_1$, Section \ref{sub:cltvect} to the proof of the CLT for $v_1$. In the expansion we distinguish three ranges: (i) $k=0,1,2$; (ii) $3 \leq k \leq L$; (iii) $L < k < \infty$, where 
\[
L=\lfloor \log n\rfloor.
\] 
We will show that (i) controls the mean and the variance in both CLTs, while (ii)-(iii) are negligible error terms.


\subsection{The spectral norm}
\label{sub:spectral_H}

In order to study $\lambda_1$, we need good bounds on the spectral norm of $H$. The spectral norm of matrices with inhomogeneous entries has been studied in a series of papers \cite{benaych-georgesLargestEigenvaluesSparse2017}, \cite{benaych-georgesSpectralRadiiSparse2021}, \cite{altExtremalEigenvaluesCritical2020} for different density regimes.

An important role is played by $\lambda_1(\Exp[A])$. In recent literature this quantity has been shown to play a prominent role in the so-called BBP-transition \cite{baikPhaseTransitionLargest2005}. Given our setting \eqref{eq:chunglu}, it is easy to see that 
\begin{equation}
\label{eq:largestexmat}
\lambda_1(\Exp[A])=\frac{m_2}{m_1},
\end{equation}
while all other eigenvalues of $\Exp[A]$ are zero. 

\begin{remark}
\label{rmk:m2/m1}
Since $m_0\leq \frac{m_2}{m_1}\leq m_\infty$, {\rm Assumption \ref{ass:deg}(D2) implies that 
\begin{equation}
\label{eq:m2/m1asymp}
\frac{m_2}{m_1}\asymp m_\infty.
\end{equation}
} \hfill$\spadesuit$
\end{remark}

We start with the following lemma, which ensures concentration of $\lambda_1$ and is a direct consequence of the results in \cite{benaych-georgesSpectralRadiiSparse2021} (which match Assumption \ref{ass:deg}).

\begin{lemma}
\label{lem:concentrationlambda}
Under Assumption \ref{ass:deg}, whp
\[
\left|\frac{\lambda_1(A)-\lambda_1(\Exp[A])}{\lambda_1(\Exp[A])}\right|
 = \bigo\lep\frac{1}{\sqrt{m_\infty}}\rip, \qquad n \to \infty,
\]
and consequently
\[
\frac{\lambda_1(A)}{\lambda_1\lep\Exp[A]\rip} \overset{\Pro}\to 1, \qquad n \to \infty.
\]
 \end{lemma}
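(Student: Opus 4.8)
The plan is to reduce the statement to a bound on the operator norm of the centred matrix $H = A-\Exp[A]$ and then to invoke the sparse random matrix estimates of Benaych-Georges, Bordenave and Knowles. Since $A$ and $\Exp[A]$ are symmetric, Weyl's inequality gives
\[
\bigl|\lambda_1(A) - \lambda_1(\Exp[A])\bigr| \le \|A-\Exp[A]\| = \|H\| ,
\]
so it suffices to prove $\|H\| = \bigo(\sqrt{m_\infty})$ whp and then divide by $\lambda_1(\Exp[A]) = m_2/m_1$, which by Remark \ref{rmk:m2/m1} is of order $m_\infty$.

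For the operator norm bound, recall that $H$ has independent entries on and above the diagonal, each centred, bounded in absolute value by $1$, with $\Exp[H_{ij}^2] = p_{ij}(1-p_{ij}) \le p_{ij}$. Hence the maximal variance row sum satisfies
\[
\max_{i\in[n]} \sum_{j\in[n]} \Exp[H_{ij}^2] \le \max_{i\in[n]} \sum_{j\in[n]} p_{ij} = \max_{i\in[n]} d_i = m_\infty ,
\]
while the entrywise variances obey $p_{ij} = d_id_j/m_1 \le m_\infty^2/m_1 = \smallo(1)$ by Assumption \ref{ass:deg}(D1) (which ensures $m_\infty = \smallo(\sqrt{m_1})$). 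These are exactly the parameters entering the spectral norm estimates for sparse inhomogeneous random matrices in \cite{benaych-georgesSpectralRadiiSparse2021} (see also \cite{benaych-georgesLargestEigenvaluesSparse2017}): the relevant effective sparsity parameter is of order $\sqrt{m_\infty}$, and requiring it to exceed a polylogarithmic threshold is precisely the lower bound $(\log n)^{2\xi} \ll m_\infty$ in Assumption \ref{ass:deg}(D1). Under this condition one obtains $\|H\| = \bigo(\sqrt{m_\infty})$ with probability at least $1 - \ee^{-\nu(\log n)^\xi}$ for a suitable $\nu>0$, i.e.\ whp in the sense of \eqref{whp}.

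Combining the two displays with $\lambda_1(\Exp[A]) = m_2/m_1 \asymp m_\infty$ yields
\[
\left|\frac{\lambda_1(A) - \lambda_1(\Exp[A])}{\lambda_1(\Exp[A])}\right| \le \frac{\|H\|}{m_2/m_1} = \bigo\!\left(\frac{\sqrt{m_\infty}}{m_\infty}\right) = \bigo\!\left(\frac{1}{\sqrt{m_\infty}}\right)
\]
whp, which is the first assertion. For the second, Assumption \ref{ass:deg}(D1) forces $m_\infty \to \infty$, so the right-hand side tends to $0$; since any event holding whp in the sense of \eqref{whp} has probability tending to $1$, this gives $\lambda_1(A)/\lambda_1(\Exp[A]) \overset{\Pro}\to 1$.

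The main obstacle is not the probabilistic estimate itself but the bookkeeping needed to match Assumption \ref{ass:deg} with the hypotheses of the cited theorem: one must rewrite the Chung-Lu connection probabilities $p_{ij} = d_id_j/m_1$ in the normalised form used in \cite{benaych-georgesSpectralRadiiSparse2021}, verify that the required growth of the sparsity parameter and the control on the maximal entrywise variance are both consequences of $(\log n)^{2\xi} \ll m_\infty \ll n^{1/2}$ together with $m_0 \asymp m_\infty$, and confirm that the resulting deviation bound is of the stretched-exponential form \eqref{whp} rather than merely $\smallo(1)$. Once the translation is in place, the rest is immediate from Weyl's inequality and Remark \ref{rmk:m2/m1}.
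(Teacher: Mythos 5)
Your proof is correct and takes essentially the same route as the paper: Weyl's inequality reduces the problem to bounding $\|H\|$, the Benaych-Georges--Bordenave--Knowles estimates give $\|H\| = \bigo(\sqrt{m_\infty})$ whp, and dividing by $\lambda_1(\Exp[A]) = m_2/m_1 \asymp m_\infty$ finishes. The only difference is that the paper makes the BGK step explicit as an expectation bound (their Theorem 3.2) upgraded to a high-probability bound via Talagrand's inequality, and explicitly tracks the inhomogeneity parameter $\kappa = n m_\infty/m_1 = \bigo(1)$ controlled by Assumption (D2), which is exactly the bookkeeping you identified but left implicit.
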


\begin{proof}
In the proof it is understood that all statements hold whp in the sense of \eqref{whp}. Let $A=H+\Exp[A]$. Due to Weyl's inequality, we have that 
 \[
 \lambda_1(\Exp[A])-\|H\|\leq\lambda_1(A)\leq\lambda_1(\Exp[A])+\|H\|.
 \]
 From \cite[Theorem 3.2]{benaych-georgesSpectralRadiiSparse2021} we know that there is a universal constant $C>0$ such that
 \[
 \Exp\left[\|H\|\right]\leq \sqrt{m_\infty}\lep2+\frac{C}{q}\sqrt{\frac{\log n}{1\vee\log\lep\frac{\sqrt{\log n}}{q}\rip}}\rip,
 \]
 where 
 \[
 q=\sqrt{m_\infty}\wedge n^{1/10} \kappa^{-1/9}
 \]
 with $\kappa$ defined by 
 \[
\kappa = \max_{ij} \frac{p_ij}{m_\infty/n}=\frac{n m_\infty}{m_1}.
 \] 
Thanks to Assumption \ref{ass:deg}(D2), we have $\kappa=\bigo(1)$. By Remark 3.1 of \cite[Remark 3.1]{benaych-georgesSpectralRadiiSparse2021} (which gives us that $q=\sqrt{m_\infty}$ for $n$ large enough) and Assumption \ref{ass:deg}, we get that
\begin{equation}
\label{eq:spectralnorm}
\Exp\left[\|H\|\right] \leq 
\begin{cases} 
\sqrt{m_\infty}\lep2+\frac{C\sqrt{\log n}}{\sqrt{m_\infty}}\rip, \qquad (\log n)^{2\xi}\leq\sqrt{m_\infty}
\leq n^{1/10} \kappa^{-1/9},\\[0.2cm]
\sqrt{m_\infty}\lep2+\frac{C'\sqrt{\log n}}{n^{1/10}}\rip, \qquad \sqrt{m_\infty}\geq n^{1/10} \kappa^{-1/9}.
\end{cases}
\end{equation}
Using \cite[Example 8.7]{boucheronConcentrationInequalitiesNonasymptotic2013} or \cite[Equation 2.4]{benaych-georgesSpectralRadiiSparse2021} (the Talagrand inequality), we know that there exists a universal constant $c>0$ such that
\[
\Pro\lep \left|\|H\|-\Exp[\|H\|]\right|>t\rip\leq 2\ee^{-ct^2}.
\]
For $t=\sqrt{\nu (\log n)^\xi}$,
\begin{equation}
\label{eq:normH}
\Exp[\|H\|]-\sqrt{\nu} (\log n)^{\xi/2}\leq\|H\|\leq \Exp[\|H\|]+\sqrt{\nu} (\log n)^{\xi/2}.
\end{equation}
Thus, we have
\begin{equation}
\label{eq:normH2}
\left|\lambda_1(A)-\lambda_1(\Exp[A])\right|\le \|H\|\le \sqrt{m_{\infty}}(2+\smallo(1))+ \sqrt{\nu} (\log n)^{\xi/2}. 
\end{equation}
Using that $\lambda_1(\Exp[A])= m_2/m_1$, we have that whp the following bound holds:
\[
\left|\frac{\lambda_1(A)-\lambda_1(\Exp[A])}{\lambda_1(\E[A])}\right|
\le \frac{\sqrt{m_\infty}}{m_2/m_1}\lep 2+\smallo(1)\rip+\frac{ \sqrt{\nu} (\log n)^{\xi/2}}{m_2/m_1}
= \bigo\lep\frac{\sqrt{m_\infty}}{m_2/m_1}\rip.
\]
Via Assumption \ref{ass:deg} and \eqref{eq:m2/m1asymp} the claim follows.
\end{proof}

\begin{remark}
\label{rem:obs}
$\mbox{}$
{\rm 
\begin{itemize}
\item[(a)] 
The proof of Lemma \ref{lem:concentrationlambda} works well if we replace Assumption \ref{ass:deg}(D2) by a milder condition. Indeed, the former is directly linked to the parameter $\kappa$ that appears in the proof of Lemma \ref{lem:concentrationlambda} and in the proof of \cite[Theorem 3.2]{benaych-georgesSpectralRadiiSparse2021}, which contains a more general condition on the inhomogeneity of the degrees.
\item[(b)] 
Note that a consequence of proof of Lemma \ref{lem:concentrationlambda} is that whp
\begin{equation}
\label{eq:normbound}
\frac{\|H\|}{\lambda_1(A)}\le 1-C_0
\end{equation}
for some $C_0\in (0,1)$. This allows us to claim that whp the inverse
\begin{equation}
\label{eq:inverse}
\left(I- \frac{H}{\lambda_1(A)}\right)^{-1}
\end{equation}
exists.
\end{itemize}
}\hfill$\spadesuit$
\end{remark}

\begin{lemma}
\label{lemma:exppowers}
Let $1\le k\le L$. Then, under Assumption \ref{ass:deg}, whp 
\[
\left| \la e,   H^k e\ra  - \bbe \left[\la e,  H^k e\ra \right]\right| \leq  
C \frac{m_2}{m_1}\frac{ m_\infty^{\frac{k}{2}}(\log n)^{k\xi}}{\sqrt{n}},
\]
i.e.,
\[
\max_{1 \leq k \leq L} \Pro\left( \left|\la e,   H^k e\ra  - \bbe \left[\la e,  H^k e\ra \right]\right|
>  \frac{C (\log n)^{k\xi}m_\infty^{\frac{k}{2}}}{\sqrt{n}} \frac{m_2}{m_1}\right) \leq \ee^{-\nu(\log n)^\xi},
\qquad n\geq n_1(\nu,\xi).
\]
\end{lemma}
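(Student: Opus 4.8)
The plan is to apply a concentration inequality for functions of independent random variables to the quantity $\la e, H^k e\ra$, viewed as a function of the independent Bernoulli entries $\{H_{st}\}_{s\le t}$. First I would write out $\la e, H^k e\ra = \sum_{i_0,\dots,i_k} e_{i_0} e_{i_k} H_{i_0 i_1} H_{i_1 i_2}\cdots H_{i_{k-1} i_k}$ and check that as a function of the independent coordinates $x = (H_{st})_{s\le t} \in \prod [-1,1]$ it has a controlled bounded-difference/Lipschitz constant: changing a single entry $H_{st}$ changes the sum by at most the weighted count of walks of length $k$ through the edge $\{s,t\}$. The natural tool here is the one from \cite{erdosSpectralStatisticsErdos2013} alluded to in the discussion (point {\bf 2} of Section \ref{sec:discussion}), which is tailored to polynomial-type functionals of the entries of a sparse random matrix and yields deviations on the scale $(\log n)^\xi$ with probability $\ee^{-\nu(\log n)^\xi}$; alternatively one can iterate the Talagrand/bounded-difference inequality already used in the proof of Lemma \ref{lem:concentrationlambda}, at the cost of bookkeeping the $k$-dependence of the constants.

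The key computation is the a priori bound on $\bbe[\la e, H^k e\ra]$ and on the relevant Lipschitz constant. Here I would use: (a) $\|e\|_2^2 = m_2/m_1$, so $e/\|e\|_2 = \tilde e$ is a unit vector and $\la e, H^k e\ra = (m_2/m_1)\la \tilde e, H^k \tilde e\ra$ — this is exactly where the prefactor $m_2/m_1$ in the statement comes from; (b) the entries of $e$ satisfy $e_i = d_i/\sqrt{m_1} \asymp m_\infty/\sqrt{m_1}$ by Assumption \ref{ass:deg}(D2), so $e$ is ``flat'' up to constants; (c) the spectral-norm bound $\|H\| \le (2+o(1))\sqrt{m_\infty}$ whp from \eqref{eq:normH2}, which controls $|\la \tilde e, H^k \tilde e\ra| \le \|H\|^k \le (C\sqrt{m_\infty})^k$ on the good event, and feeds into the truncation needed to turn the a.s.\ bounded-difference estimate into the claimed tail bound. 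The factor $1/\sqrt n$ in the statement should emerge from the combination: a single edge carries weight $e_s e_t \asymp m_\infty/m_1 \asymp 1/n$ relative to $\|e\|_2^2$, and the walk combinatorics contributes the remaining $m_\infty^{(k-2)/2}(\log n)^{(k-1)\xi}$ or so, with one extra $(\log n)^\xi$ coming from the concentration scale. Concretely, I expect each application of the increment inequality to lose one power of $\sqrt{m_\infty}\,(\log n)^\xi$ and the union bound over $1\le k\le L=\lfloor\log n\rfloor$ to be absorbed into $n_1(\nu,\xi)$ since $L$ is only polylogarithmic.

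The step I expect to be the main obstacle is getting the $k$-dependence of the constants uniform over $1\le k\le L$: a crude iteration of a bounded-difference inequality would produce a constant like $C^k$ or $k!$, which at $k=L=\lfloor\log n\rfloor$ is quasi-polynomial in $n$ and would swamp the $1/\sqrt n$ gain. The resolution is to not iterate blindly but to use the structure — either invoke the cited result of \cite{erdosSpectralStatisticsErdos2013} directly (it is designed to give a fixed exponent $\xi$ in the tail regardless of the polynomial degree, as long as the degree is $o(\log n/\log\log n)$ or similar), or to control the Lipschitz constant of $x\mapsto \la e, H(x)^k e\ra$ on the high-probability event $\{\|H\|\le C\sqrt{m_\infty}\}$ by $\|e\|_2^2 \cdot k \cdot \|H\|^{k-1}\cdot(\max_i e_i/\|e\|_2)^2$-type quantities, where the single $k$ (not $C^k$) comes from the derivative of $H^k$ being a sum of $k$ terms. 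I would therefore organize the proof around a clean statement of the concentration input, then verify its hypotheses for $\la e, H^k e\ra$, and only at the end plug in $\|e\|_2^2 = m_2/m_1$, the flatness of $e$, and the spectral-norm bound to read off the exponents $m_\infty^{k/2}(\log n)^{k\xi}/\sqrt n$.
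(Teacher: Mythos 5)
The paper does not actually prove this lemma: it declares it a generalization of \cite[Lemma~6.5]{erdosSpectralStatisticsErdos2013} and states that the proof is a ``straightforward modification of the arguments'' there. Your secondary suggestion --- invoking the Erd\H{o}s--Knowles--Yau--Yin estimate directly and tracking how the inhomogeneous weights enter --- is therefore exactly what the authors do. Your primary plan, a bounded-difference/Talagrand iteration, is a genuinely different route, and as written it has a gap I do not see how to close.

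The problem is the claimed per-edge Lipschitz bound. Writing
\[
\partial_{H_{st}}\la e, H^k e\ra \;=\; \sum_{a+b=k-1}\Big[(H^a e)_s (H^b e)_t + (H^a e)_t (H^b e)_s\Big],
\]
the bound $k\,\|H\|^{k-1}(\max_i e_i)^2$ you propose would require $|(H^a e)_s|\lesssim \|H\|^a\,\max_i e_i$, i.e.\ an $\ell^\infty$ estimate on $H^a e$. This does not follow from the spectral bound $\|H\|\lesssim\sqrt{m_\infty}$: spectral control only gives $\|H^a e\|_2\le\|H\|^a\|e\|_2$, which overshoots the entrywise bound by a factor $\|e\|_2/\max_i e_i\asymp\sqrt{n}$. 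The $\ell^\infty$ control you need is precisely the content of Lemma~\ref{lem:conccomponent}, which the paper derives \emph{from} the present lemma, so using it here would be circular. If instead you use the honest increment bound $c_{st}\asymp k\,\|H\|^{k-1}\|e\|_2^2\asymp k\,m_\infty^{(k-1)/2}(m_2/m_1)$, then $\sum_{s\le t}c_{st}^2\asymp n^2 k^2 m_\infty^{k-1}(m_2/m_1)^2$, and McDiarmid gives a deviation scale of order $n\,k\,m_\infty^{(k-1)/2}(m_2/m_1)(\log n)^{\xi/2}$, which is larger than the claimed $\frac{m_2}{m_1}\frac{m_\infty^{k/2}(\log n)^{k\xi}}{\sqrt n}$ by a factor on the order of $n^{3/2}/\sqrt{m_\infty}\gtrsim n^{5/4}$. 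On top of that, McDiarmid-type inequalities require deterministic worst-case increments, not increments valid only on the event $\{\|H\|\le C\sqrt{m_\infty}\}$; the truncation you sketch is not automatic and would need its own argument.

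The EKYY moment method sidesteps all of this: one bounds $\Exp\big|\la e, H^k e\ra - \Exp\la e, H^k e\ra\big|^{2p}$ by expanding into walks, using independence and the centering $\Exp[h_{ij}]=0$ so that only a combinatorially controlled set of pairings survives --- a \emph{typical}-case count, not a worst-case Lipschitz count, which is exactly what wins the missing factor of $n$. I would recommend you commit to that route (which you correctly flagged as available). The genuine work in adapting it is to replace the flat vector $N^{-1/2}(1,\dots,1)^t$ of \cite{erdosSpectralStatisticsErdos2013} by $e_i=d_i/\sqrt{m_1}$ and keep track of the extra weights in the path sums; Assumption~\ref{ass:deg}(D2) makes all $e_i$ comparable, which is why the authors call the modification straightforward and why the final bound carries the overall prefactor $\|e\|_2^2=m_2/m_1$ that you correctly identified.
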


\noindent
Lemma \ref{lemma:exppowers} is a generalization to the inhomogeneous setting of \cite[Lemma 6.5]{erdosSpectralStatisticsErdos2013}. We skip the proof because it requires a straightforward modification of the arguments in \cite{erdosSpectralStatisticsErdos2013}.

\begin{lemma}
\label{lemma:expectationbounds}
Under Assumption \ref{ass:deg}, for $2\le k\le L$, there exists a constant $C>0$ such that
\begin{equation}
\label{eq:expectationbounds}
\Exp\left[ \la e,   H^k e\ra\right] \le \frac{m_2}{m_1} (Cm_\infty)^{k/2}.
\end{equation}
\end{lemma}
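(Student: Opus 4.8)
The plan is to estimate $\Exp[\langle e, H^k e\rangle]$ by expanding the $k$-th power of $H$ into a sum over walks and using the independence and the Bernoulli structure of the entries of $H = A - \Exp[A]$. Writing $H_{ij} = A_{ij} - p_{ij}$, these are independent (over $i \le j$), centered, with $\Exp[H_{ij}^2] = p_{ij}(1-p_{ij}) \le p_{ij}$ and, more generally, $|\Exp[H_{ij}^\ell]| \le p_{ij}$ for every $\ell \ge 2$ (a centered Bernoulli has all moments bounded by its variance, hence by $p_{ij}$). Expanding,
\[
\Exp[\langle e, H^k e\rangle] = \sum_{i_0, i_1, \ldots, i_k} e_{i_0} e_{i_k} \, \Exp\!\left[\prod_{r=1}^{k} H_{i_{r-1} i_r}\right],
\]
and the expectation of the product vanishes unless every edge $\{i_{r-1}, i_r\}$ appearing in the closed-walk-like sequence is traversed at least twice. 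So only walks on a subgraph with at most $\lfloor k/2 \rfloor$ distinct edges contribute, and on each such walk the product of expectations is bounded in absolute value by $\prod_{\text{distinct edges } \{a,b\}} p_{ab}$.

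The key step is then a counting-and-weighting argument. First I would bound $|e_{i_0} e_{i_k}| \le m_\infty / m_1 = \bigo(m_\infty^{-1})$ (using $d_i \le m_\infty$ and $m_1 \asymp n m_\infty$ by Assumption \ref{ass:deg}(D2)), or more carefully keep $e_{i_0} = d_{i_0}/\sqrt{m_1}$, $e_{i_k} = d_{i_k}/\sqrt{m_1}$ and absorb these factors into the edge weights. The standard device is to interpret the sum over walks as follows: each distinct edge $\{a,b\}$ used contributes a factor $p_{ab} = d_a d_b / m_1$, and one sums freely over the "new" vertices visited. Because the walk visits at most $\lfloor k/2\rfloor$ distinct edges and hence at most $\lfloor k/2\rfloor + 1$ vertices, and because each summation over a fresh vertex $v$ adjacent along an edge to an already-fixed vertex $u$ produces $\sum_v p_{uv} = d_u \sum_v d_v / m_1 = d_u$, the telescoping collapses the vertex sums cleanly. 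Tracking the combinatorial factor for the number of walk shapes on a given edge set (a Catalan-type bound of the form $C^k$ for closed walks of length $k$ using $\le k/2$ edges) gives a bound of the shape $(e, \text{constant}) \cdot C^k \cdot (\text{vertex-sum contribution})$. Collecting the two boundary factors $e_{i_0}, e_{i_k}$ against one surviving $\sum d_i^2/m_1 = m_2/m_1$ yields the claimed $\tfrac{m_2}{m_1}(C m_\infty)^{k/2}$: each of the $\le k/2$ edges contributes a factor $\bigo(m_\infty)$ (one power of $m_\infty$ from the free vertex sum weighted by degrees, which are all $\asymp m_\infty$), and the walk-shape count contributes the remaining part of the constant $C^{k/2}$.

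The main obstacle I anticipate is making the vertex-summation bookkeeping precise when the walk is not a simple closed walk — i.e., correctly organizing which vertices are "free" versus "already seen" so that every free-vertex sum genuinely produces the factor $d_u$ (from $\sum_v d_v d_u/m_1$) rather than an uncontrolled $m_2/m_1$ or worse. The cleanest route is to encode each term by its walk on the multigraph of visited edges, observe that the underlying graph of distinct edges is connected with $V$ vertices and $E$ edges where $E \le k/2$ and $V \le E+1$ (so the graph has at most one independent cycle per... more carefully, $V - 1 \le E$), root the walk at $i_0$, and sum vertices in the order first visited; the presence of the two boundary weights $e_{i_0}, e_{i_k}$ rather than one means one vertex sum is "used up" against the second boundary factor, which is exactly why $m_2/m_1$ (and not $(m_2/m_1)^2$ or $1$) appears as the prefactor. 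A subtlety worth flagging: if $i_0 = i_k$ this is a genuine closed walk and the analysis is the textbook one; if $i_0 \ne i_k$, the walk from $i_0$ to $i_k$ together with the two boundary weights can be handled by the same bound since the "path" part only adds vertices that are still summed against degree weights. Since this is precisely the combinatorial engine underlying \cite[Lemma 6.5]{erdosSpectralStatisticsErdos2013}, adapted here only by replacing the homogeneous $p$ with $p_{ij} = d_i d_j/m_1$ and using Assumption \ref{ass:deg}(D2) to keep all degrees comparable to $m_\infty$, I expect the adaptation to be routine once the weighting scheme is set up correctly, and the final constant $C$ to be explicit in terms of the Catalan/walk-counting bound.
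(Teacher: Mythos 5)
Your argument is correct in outline but takes a genuinely different route from the paper. The paper bypasses the walk-counting combinatorics entirely: it splits $\Exp[\la e, H^k e\ra] = \Exp[\la e, H^k e\ra \one_{\mathcal E}] + \Exp[\la e, H^k e\ra \one_{\mathcal E^c}]$, where $\mathcal E$ is the high-probability event from \eqref{eq:normH} on which $\|H\|\leq C\sqrt{m_\infty}$. On $\mathcal E$ the claimed bound is immediate from $|\la e, H^k e\ra| \leq \|e\|_2^2\,\|H\|^k \leq \tfrac{m_2}{m_1}(Cm_\infty)^{k/2}$, and on $\mathcal E^c$ the crude deterministic bound $|\la e, H^k e\ra| \leq n^{k+1}m_\infty^2/m_1$ combined with $\Pro(\mathcal E^c)\leq \ee^{-\nu(\log n)^\xi}$, $\xi>2$, makes that contribution negligible by Cauchy--Schwarz. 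Your route instead expands in walks, kills terms with a singly-traversed edge, bounds each distinct edge by $p_{ab}$, telescopes the free-vertex sums (each yielding a factor $\asymp m_\infty$ under Assumption \ref{ass:deg}(D2)), and controls the number of walk shapes by a Catalan-type count --- this is precisely the engine of \cite[Lemma 6.5]{erdosSpectralStatisticsErdos2013}, adapted to $p_{ij}=d_id_j/m_1$. Both approaches are sound. The paper's version is shorter because it outsources the combinatorics to the spectral-radius estimate of \cite{benaych-georgesSpectralRadiiSparse2021}, which is already needed elsewhere, so the lemma collapses to a two-line consequence of the operator-norm bound. Your version is more self-contained and does not presuppose concentration of $\|H\|$, at the cost of having to carry out the F\"uredi--Koml\'os bookkeeping carefully: distinguishing tree edges from excess edges (the latter costing a compensating factor $1/n$ in the free-vertex sum), handling the $i_0\neq i_k$ case you rightly flag, and verifying that the shape-count constant is uniform over $k\leq L=\lfloor\log n\rfloor$. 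If you pursue it, note that the paper in fact invokes exactly this combinatorial machinery (without proof) as Lemma \ref{lemma:exppowers} for the fluctuations $\la e,H^ke\ra-\Exp\la e,H^ke\ra$, so you would be proving a companion estimate by the same method rather than introducing a new tool.
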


\begin{proof}
Let $\mathcal E$ be the high probability event defined by \eqref{eq:normH}, i.e.,
\[
\| H\| \le \E[\|H\|] + \sqrt{\nu}(\log n)^{\xi/2}\le m_\infty\left( 1+ \bigo\left( \frac{(\log n)^{\xi/2}}{m_\infty}\right)\right).
\]
Due to Assumption \ref{ass:deg}(D1) we can bound the right-hand side by $Cm_\infty$. Since $\|e\|_2^2= m_2/m_1$, on this event we have
\[
\Exp\left[ \left( \la e,   H^k e\ra\right)\one_{\mathcal E}\right]
\le \|e\|_2^2\, \Exp[\|H\|^k\one_{\mathcal E}]\\
\le \frac{m_2}{m_1} (Cm_\infty)^{k/2} .
\]
We show that the expectation when evaluated on the complementary event is negligible. Indeed, observe that
\begin{align*}
\Exp\left[  \la e,   H^k e\ra\right]
&= \Exp\left(\sum_{i_1, \ldots, i_{k+1}=1}^n e_{i_1}e_{i_{k+1}} \prod_{j=1}^k H(i_j, i_{j+1})\right)^2\\
&\leq \left( \frac{n^{k+1} m_\infty^2}{m_1}\right)^2\le C\ee^{(2k+2)\log n} \le \ee^{2(\log n)^2},
\end{align*}
where in the last inequality we use that $m_\infty=\smallo(\sqrt{m_1})$. This, combined with the exponential decay of the event $\mathcal E^c$, gives
\[
\Exp\left[ \la e,   H^k e\ra\one_{A^c}\right] \le C\ee^{- \nu(\log n)^{\xi}},
\]
and so the claim follows.
\end{proof}


\subsection{Expansion for the principal eigenvalue}
\label{sub:EA}

We denote the event in Lemma \ref{lem:concentrationlambda} by $\mathcal E$, which has high probability. As noted in Remark \ref{rem:obs}(b), $I-\frac{H}{\lambda_1}$ is invertible on $\mathcal E$. Hence, expanding on $\mathcal E$, we get
\[
\lambda_1= \sum_{k=0}^\infty \la e,  \frac{H^k}{\lambda_1^k} e\ra .
\]
We split the sum into two parts:
\begin{equation}
\label{eq:masterequation}
\lambda_1 = \sum_{k=0}^{L}  \frac{ \la e,   H^k e\ra}{\lambda_1^{k}} +  \sum_{k=L+1}^\infty \frac{\la e,   H^k e\ra}{\lambda_1^{k}}.
\end{equation}

First we show that we may ignore the second sum. To that end we observe that, by Assumption \ref{ass:deg} (D1), on the event $\mathcal E$ we can estimate
\begin{align}
\left| \sum_{k= L+1}^\infty \frac{ \la e,   H^k e\ra}{\lambda_1^{k}}\right|
&\le \sum_{k=L+1}^\infty \frac{\|e\|_2^2 \|H\|^k}{\lambda_1^k}
\le \sum_{k=L+1}^\infty \frac{m_2}{m_1}  \frac{m_{\infty}^{k/2}}{ (Cm_2/m_1)^k}\nonumber\\
&\le  \sum_{k=L+1}^\infty \frac{C'}{ m_{\infty}^{ k/2-1}}=\bigo\left(\ee^{-c\log \sqrt{n}}\right).
\label{eq:tailsum}
\end{align}
Because of \eqref{eq:tailsum} and the fact that $\Exp(\la e,H e\ra)=0$, \eqref{eq:masterequation} reduces to
\begin{align*}
\lambda_1
&=\sum_{k=3}^{L}  \frac{\Exp \left[\la e, H^k e\ra\right]}{\lambda_1^{k}}
+ \sum_{k=3}^{L}  \frac{ \la e, H^k e\ra - \Exp\left[\la e, H^k e\ra\right]}{\lambda_1^{k}}\\
&\qquad +\la e, e\ra
+ \frac{1}{\lambda_1} \la e, H e\ra + \frac{1}{\lambda_1^2} \la e, H^2 e\ra+\smallo(1).
\end{align*}

Next, we estimate the second sum in the above equation. Using Lemma \ref{lem:concentrationlambda}, we get
\begin{align*}
&\left|\sum_{k=3}^{L}  \frac{ \la e, H^k e\ra  - \Exp \left[\la e, H^k e\ra \right]}{\lambda_1^{k}}\right|\\
&\le \sum_{k=3}^{L}  \frac{C  m_{\infty}^{\frac{k}{2}}(\log n)^{k\xi}}{\sqrt{n}(m_2/m_1)^{k-1}}
\le \sum_{k=3}^{L}  \frac{C(\log n)^{k\xi}}{\sqrt{n}m_{\infty}^{k/2-1}}
\le\bigo\left( \frac{C(\log n)^{\xi+1}}{\sqrt{nm_\infty}}\right) = \smallo(1).
\end{align*} 
From Lemma \ref{lemma:expectationbounds} we have
\[
\sum_{k=3}^L \frac{\Exp \la e,   H^k e\ra }{\lambda_1^{k}}
\le \sum_{k=3}^L \frac{ \frac{m_2}{m_1} (Cm_{\infty})^{k/2} }{\left(m_2/m_1\right)^k}
=\bigo\left( \frac{1}{\sqrt{m_{\infty}}}\right)= \smallo(1),
\]
where the last estimate follows from Assumption \ref{ass:deg}(D1). Hence, on $\mathcal E$,
\[
\lambda_1= \la e, e\ra  +\frac{1}{\lambda_1} \la e,   H e\ra+ \frac{\la e,   H^2 e\ra}{\lambda_1^2}
+  \smallo(1).
\]

Iterating the expression for $\lambda_1$ in the right-hand side, we get
\begin{align*}
\lambda_1
&= \la e,   e\ra +  \la e,   H e\ra \left(  \la e, e\ra  + \frac{1}{\lambda_1}\la e,  He\ra +\frac{1}{\lambda_1^2} \la e,   H^2 e\ra
+ \smallo (1) \right)^{-1} \\
&\quad +  \la e,   H^2 e\ra \left(  \la e, e\ra + \frac{1}{\lambda_1}\la e,   H e\ra
+\frac{1}{\lambda_1^2} \la e,   H^2 e\ra + \smallo (1) \right)^{-2} + \smallo (1),
\end{align*}
Expanding the second and third term we get,
\begin{align*}
\lambda_1&=  \la e, e\ra  + \frac{\la e,   H e\ra}{\la e, e\ra }\left( 1 - \frac{ \la e, H e\ra }{\lambda_1 \la e, e\ra }
-\frac{\la e, H^2 e\ra }{\lambda_1^2 \la e, e\ra } + \smallo (1) \right) \\
&\quad + \frac{\la e,  H^2 e\ra }{(\la e, e\ra )^2}\left( 1 - \frac{2\la e, H e\ra }{\lambda_1 \la e, e\ra }
-\frac{2\la e, H^2 e\ra }{\lambda_1^2 \la e, e\ra} + \smallo (1) \right) + \smallo (1),\\
&=  \la e, e \ra + \frac{\la e, H e\ra}{ \la e, e\ra } - \frac{\la e,  H e\ra^2}{\lambda_1 \la e,  e\ra^2}
+ \frac{\la e, H^2 e\ra }{ \la e, e\ra^2} + \smallo (1).
\end{align*}
Here we use that $\la e, e\ra= m_2/m_1\to \infty$, and we ignore several other terms because they are small whp, for example,
$$
\frac{\la e, H e\ra \la e, H^2 e\ra}{\lambda_1^2\la e, e\ra^2} = \bigo\left( \frac{m_\infty^{3/2}}{(m_2/m_1)^4}\right)=\smallo(1).
$$
One more iteration gives
\begin{align*}
\lambda_1
&=  \la e, e \ra + \frac{\la e, H e\ra}{\la e, e \ra} + \frac{\la e, H^2 e\ra}{\la e, e \ra^2} \\
&\quad - \frac{\la e, H e\ra^2}{\la e, e \ra^2}\left(  \la e, e \ra + \frac{1}{\lambda_1}\la e, H e\ra
+\frac{1}{\lambda_1^2} \la e, H^2 e\ra + \smallo (1) \right)^{-1} + \smallo (1) \\
&=  \la e, e \ra + \frac{\la e, H e\ra}{\la e, e \ra} + \frac{\la e, H^2 e\ra}{ \la e, e \ra^2}
- \frac{\la e, He\ra^2}{\la e, e \ra^3} + \frac{\la e, H^2 e\ra^2 \la e, H e\ra}{\lambda_1 \la e, e \ra^3}
+ \frac{\la e, H^2 e\ra^3}{\lambda_1^2 \la e, e \ra^3} + \smallo (1).
\end{align*}

\begin{proof}[{\bf Proof of Theorem \ref{thm:CLTlambda} (I)}]
Since the probability of $\mathcal E^c$ decays exponentially with $n$, taking the expectation of the above term and using that $\Exp[e^\prime W e]=0$, we obtain
\begin{align*}
\mathbb{E} [\lambda_1] =  \la e, e \ra + \frac{\mathbb{E}[\la e, H^2 e\ra ]}{ \la e, e \ra^2}
- \frac{\mathbb{E}[\la e, He\ra ^2]}{ \la e, e \ra^3} + \smallo (1)
= \frac{m_2}{m_1}+ \frac{m_1m_3}{m_2^2}- \frac{ m_3^2}{m_2^3}
+ \smallo(1).
\end{align*}

Note that 
$$
\frac{ m_3^2}{m_2^2}\le \frac{m_{\infty}^2}{n}=\smallo(1), 
\qquad \frac{m_1 m_3}{m_2^2}\le \left(\frac{m_{\infty}}{m_0}\right)^4=\bigo(1),
$$
and so we can write
\begin{equation}
\label{eq:explambdamax}
\mathbb{E} [\lambda_1] = \frac{m_2}{m_1}+\frac{m_1m_3}{m_2^2}+ \smallo(1).
\end{equation}
\end{proof}


\subsection{CLT for the principal eigenvalue}
\label{sub:cltlambda}

Again consider the high probability event on which \eqref{eq:inverse} holds. Recall that from the series decomposition in \eqref{eq:masterequation} we have
\begin{align}
\lambda_1&= \frac{\la e, He\ra}{\lambda_1} + \sum_{k=0}^{L} \frac{\Exp\la e, H^ke\ra}{\lambda_1^k}
+ \sum_{k=2}^{L} \frac{ \la e, H^ke\ra- \Exp \la e, H^k e\ra}{\lambda_1^k}
+ \sum_{k>L} \frac{\la e, H^k e\ra}{\lambda_1^k}.
\label{eq:master2}
\end{align}

\begin{lemma}
The equation
\begin{equation}
\label{eq:fixedpt}
x= \sum_{k=0}^{L} \frac{\Exp\la e, H^ke\ra}{x^k}
\end{equation}
has a solution $x_0$ satisfying
\[
\lim_{n\to\infty} \frac{x_0}{m_2/m_1} = 1.
\]
\end{lemma}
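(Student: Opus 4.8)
The plan is to treat \eqref{eq:fixedpt} as a fixed-point problem on a small interval around $m_2/m_1$ and invoke a continuity/monotonicity argument. First I would define, on the high-probability event $\mathcal E$, the function
\[
\Phi(x) = \sum_{k=0}^{L} \frac{\Exp\la e, H^k e\ra}{x^k}, \qquad x > 0,
\]
and observe two elementary facts: (i) the $k=0$ term equals $\la e,e\ra = m_2/m_1$, and (ii) the $k=1$ term vanishes since $\Exp[\la e, He\ra]=0$. Hence $\Phi(x) = \tfrac{m_2}{m_1} + \sum_{k=2}^{L} \Exp[\la e,H^k e\ra]\, x^{-k}$. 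The point is to show that the correction series is uniformly small relative to $m_2/m_1$ on the interval $I_n = \big[(1-\delta)\tfrac{m_2}{m_1},\,(1+\delta)\tfrac{m_2}{m_1}\big]$ for a fixed small $\delta$ (and $n$ large). Using Lemma~\ref{lemma:expectationbounds}, for $x\in I_n$,
\[
\left|\Phi(x) - \frac{m_2}{m_1}\right| \le \sum_{k=2}^{L} \frac{\frac{m_2}{m_1}(Cm_\infty)^{k/2}}{\big((1-\delta)m_2/m_1\big)^k}
\le \frac{m_2}{m_1}\sum_{k=2}^{L} \left(\frac{C'\sqrt{m_\infty}}{m_2/m_1}\right)^{k} = \frac{m_2}{m_1}\cdot \bigo\!\left(\frac{1}{m_\infty}\right),
\]
where I used \eqref{eq:m2/m1asymp} (so that $\sqrt{m_\infty}/(m_2/m_1) \asymp m_\infty^{-1/2} \to 0$ by Assumption~\ref{ass:deg}(D1)) and summed the resulting geometric series. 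In particular $\Phi$ maps $I_n$ into $I_n$ for $n$ large.

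Next I would upgrade this to existence of a fixed point. The cleanest route is to apply the intermediate value theorem to $\Psi(x) := \Phi(x) - x$, which is continuous on $I_n$: at the left endpoint $x_- = (1-\delta)\tfrac{m_2}{m_1}$ one has $\Psi(x_-) = \Phi(x_-) - x_- \ge \tfrac{m_2}{m_1}(1 + \bigo(m_\infty^{-1})) - (1-\delta)\tfrac{m_2}{m_1} > 0$, while at the right endpoint $x_+ = (1+\delta)\tfrac{m_2}{m_1}$ one has $\Psi(x_+) \le \tfrac{m_2}{m_1}(1 + \bigo(m_\infty^{-1})) - (1+\delta)\tfrac{m_2}{m_1} < 0$, for $n$ large enough that the $\bigo(m_\infty^{-1})$ term is smaller than $\delta$. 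Hence there exists $x_0 \in I_n$ with $\Psi(x_0)=0$, i.e. $x_0$ solves \eqref{eq:fixedpt}. Since $\delta$ was arbitrary, running this for a sequence $\delta\downarrow 0$ and extracting the corresponding solutions yields $x_0/(m_2/m_1) \to 1$; alternatively, one fixes $\delta$, gets $x_0 = (1+\bigo(m_\infty^{-1}))\,\tfrac{m_2}{m_1}$ directly from the bound $|\Phi(x_0) - m_2/m_1| = (m_2/m_1)\,\bigo(m_\infty^{-1})$ together with $x_0 = \Phi(x_0)$, which already gives the stated limit with a rate.

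The main obstacle is not the fixed-point argument itself — which is soft — but ensuring the correction bound is genuinely uniform in $x$ over the whole interval $I_n$ and uniform in $k$ up to $k=L=\lfloor\log n\rfloor$; this is exactly where Lemma~\ref{lemma:expectationbounds} (valid for $2 \le k \le L$) and the geometric decay $(C'\sqrt{m_\infty}/(m_2/m_1))^k \le (C''/\sqrt{m_\infty})^k$ are essential, and one should double-check that the tail $k > L$ plays no role here because the sum in \eqref{eq:fixedpt} is truncated at $L$ by definition. A minor point to be careful about: the expectations $\Exp[\la e,H^k e\ra]$ are deterministic (no conditioning on $\mathcal E$ is needed for $\Phi$ to be well defined), so the equation \eqref{eq:fixedpt} and its solution $x_0$ are deterministic objects, and the role of $\mathcal E$ is only to justify, later, that the actual $\lambda_1$ is close to $x_0$.
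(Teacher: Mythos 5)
Your proposal is correct and follows essentially the same route as the paper's proof: both isolate the $k=0$ term $\la e,e\ra = m_2/m_1$, use $\Exp\la e, He\ra=0$ to kill $k=1$, bound the $k\ge 2$ correction via Lemma~\ref{lemma:expectationbounds} and the geometric decay of $\big(C\sqrt{m_\infty}/(m_2/m_1)\big)^k$, and then apply an intermediate-value argument on $\big[(1-\delta)\tfrac{m_2}{m_1},(1+\delta)\tfrac{m_2}{m_1}\big]$. Your closing observation that the equation and $x_0$ are deterministic (so no conditioning on $\mathcal{E}$ is needed to define $\Phi$) is accurate, and your alternative derivation of the limit via $x_0=\Phi(x_0)$ with the explicit $\bigo(m_\infty^{-1})$ rate is a clean way to avoid the $\delta\downarrow 0$ diagonal extraction.
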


\begin{proof}
Define the function $h\colon\, (0,\infty)\to\mathbb R$ by
\[
h(x)= \sum_{k=0}^{\log n} \frac{\Exp\la e, H^ke\ra}{x^k}.
\]
Since $\Exp[e^\prime He]=0$, we have
\[
h\left(\frac{xm_2}{m_1}\right)= \frac{m_2}{m_1} +\sum_{k=2}^{\log n} \frac{\Exp\la e, H^ke\ra}{(xm_2/m_1)^k}.
\]
For $x>0$,
\begin{align*}
\left|\sum_{k=2}^{\log n} \frac{\Exp[\la e, H^ke\ra]}{(xm_2/m_1)^k}\right|
&\le \sum_{k=2}^\infty \frac{1}{(xm_2/m_1)^k}\frac{m_2}{m_1} (Cm_\infty)^{k/2} \\
&= \smallo\left( \frac{m_2}{m_1} \sum_{k=2}^\infty\frac{1}{x^k (\log n)^{k\xi}}\right)
=\smallo\left( \frac{m_2}{m_1} x^{-2}\right).
\end{align*}
This shows that
\[
\lim_{n\to\infty} \frac{1}{m_2/m_1} \sum_{k=0}^{\log n} \frac{\Exp\la e, H^ke\ra}{(xm_2/m_1)^k} = 1.
\]
Hence, for any $0<\delta <1$,
\[
\lim_{n\to\infty} \frac{1}{m_2/m_1} \left[ \frac{m_2}{m_1}(1+\delta)- h\left((1+\delta) \frac{m_2}{m_1}\right)\right] = \delta.
\]
So, for large enough $n$,
\[
h\left((1+\delta) \frac{m_2}{m_1}\right) < \frac{m_2}{m_1} (1+\delta).
\]
Similarly, for any $0<\delta<1$,
\[
h\left((1-\delta) \frac{m_2}{m_1}\right) > \frac{m_2}{m_1} (1-\delta).
\]
This shows that there is a solution for \eqref{eq:fixedpt}, which lies in the interval $[ \frac{m_2}{m_1}(1-\delta), \frac{m_2}{m_1}(1-\delta)]$.
\end{proof}

\begin{lemma}
\label{lemma:crucial}
Let $x_0$ be a solution for \eqref{eq:fixedpt}. Define
\[
R_n=\lambda_1 -x_0 -\frac{\la e, H e\ra}{m_2/m_1}.
\]
Then 
$$
R_n=\smallo_\Pro\lep \frac{m_3}{m_2\sqrt{m_1}}\rip,
\qquad \Exp\left[|R_n|\right]=\smallo\lep \frac{m_3}{m_2\sqrt{m_1}}\rip.
$$
\end{lemma}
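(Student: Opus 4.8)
The plan is to compare the series identity \eqref{eq:master2} with the relation $x_0=h(x_0)$ defining $x_0$, where $h(x)=\sum_{k=0}^{L}x^{-k}\,\Exp\la e,H^ke\ra$ (cf.\ \eqref{eq:fixedpt}; the $k=1$ term vanishes). Write \eqref{eq:master2} as $\lambda_1=\frac{\la e,He\ra}{\lambda_1}+h(\lambda_1)+(c)+(d)$, with $(c)=\sum_{k=2}^{L}\lambda_1^{-k}\bigl(\la e,H^ke\ra-\Exp\la e,H^ke\ra\bigr)$ the centered higher-moment part and $(d)=\sum_{k>L}\lambda_1^{-k}\la e,H^ke\ra$ the tail. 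Subtracting $x_0=h(x_0)$ and using the mean value theorem, $h(\lambda_1)-h(x_0)=h'(\zeta)(\lambda_1-x_0)$ for some $\zeta$ between $\lambda_1$ and $x_0$. On the high-probability event $\mathcal E$ of Lemma~\ref{lem:concentrationlambda} one has $\lambda_1,x_0\asymp m_2/m_1\asymp m_\infty$ (Remark~\ref{rmk:m2/m1}), hence $\zeta\asymp m_\infty$, and the bound $|\Exp\la e,H^ke\ra|\le\frac{m_2}{m_1}(Cm_\infty)^{k/2}$ of Lemma~\ref{lemma:expectationbounds} gives $|h'(\zeta)|\le\sum_{k\ge2}k\,\zeta^{-k-1}|\Exp\la e,H^ke\ra|=\bigo(1/m_\infty)$. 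Therefore, on $\mathcal E$,
\[
(\lambda_1-x_0)\bigl(1+\bigo(1/m_\infty)\bigr)=\frac{\la e,He\ra}{\lambda_1}+(c)+(d),
\]
so $R_n$ decomposes on $\mathcal E$ into: (i) $\la e,He\ra\bigl(\lambda_1^{-1}-(m_2/m_1)^{-1}\bigr)$; (ii) $(c)$; (iii) $(d)$; and (iv) $\bigo(1/m_\infty)\bigl[\la e,He\ra/\lambda_1+(c)+(d)\bigr]$. It remains to show each is $\smallo_\Pro$ and $\smallo_{L^1}$ of $\frac{m_3}{m_2\sqrt{m_1}}\asymp\sqrt{m_\infty/n}$, and to dispose of $\mathcal E^c$.

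Contributions (i), (iii), (iv) are elementary. From the expansion in Section~\ref{sub:EA}, $|\lambda_1-m_2/m_1|=\bigo(1)$ whp, while $\la e,He\ra$ is centered with $\Var(\la e,He\ra)\sim 2m_3^2/m_1^3$, so $\la e,He\ra=\bigo_\Pro(m_3/m_1^{3/2})$; thus (i) is $\bigo_\Pro\bigl(\frac{m_3/m_1^{3/2}}{(m_2/m_1)^2}\bigr)=\bigo_\Pro\bigl(m_\infty^{-1/2}\sqrt{m_\infty/n}\bigr)$, and (iv) carries an extra $1/m_\infty$. For (iii), \eqref{eq:tailsum} (and the same estimate for the centered sum) bounds $(d)$ by $m_\infty\cdot m_\infty^{-L/2}=\exp\!\bigl(-\Omega((\log n)(\log\log n))\bigr)$ whp, which is super-polynomially small. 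The matching $L^1$ bounds hold because on $\mathcal E$ the relevant factors are deterministically controlled (e.g.\ $(\lambda_1-m_2/m_1)^2/\lambda_1^2=\bigo(m_\infty^{-2})$ there, so a single application of Jensen to $\la e,He\ra$ handles (i)), whereas on $\mathcal E^c$ every quantity in sight is at most a fixed power of $n$ (for instance $\lambda_1\le\|A\|\le n$) and $\Pro(\mathcal E^c)\le\ee^{-\nu(\log n)^\xi}$ with $\xi>2$ overwhelms it.

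The core of the argument is (ii). On $\mathcal E$ one has $\lambda_1\ge c_0\,m_2/m_1$ for a fixed $c_0\in(0,1)$, hence $|(c)|\,\one_{\mathcal E}\le\sum_{k=2}^{L}(c_0m_2/m_1)^{-k}\bigl|\la e,H^ke\ra-\Exp\la e,H^ke\ra\bigr|=:T$, a quantity with deterministic denominators. By Markov's and Jensen's inequalities the claim for (ii) then reduces to a second-moment bound for the centered moments; the input I would establish is
\[
\Var\bigl(\la e,H^ke\ra\bigr)\le C^{k}\,\frac{m_\infty^{k+2}}{n},\qquad 1\le k\le L,
\]
by a walk-counting argument in the spirit of \cite{erdosSpectralStatisticsErdos2013}, the leading contribution coming from ``doubled'' generic walks of length $k$ and being of order $m_3^2m_2^{k-1}/m_1^{k+2}$. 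Granting this, $\frac{\sqrt{\Var(\la e,H^ke\ra)}}{(m_2/m_1)^k}\asymp\frac{m_\infty}{\sqrt n}\bigl(\frac{C}{m_\infty}\bigr)^{k/2}$, so $\Exp[T]$ is a convergent geometric series of order $1/\sqrt n=\smallo(\sqrt{m_\infty/n})$; Markov gives $(c)=\smallo_\Pro(\sqrt{m_\infty/n})$, and adding the super-polynomially small $\mathcal E^c$-contribution (bounded crudely as above) yields $\Exp[|(c)|]=\smallo(\sqrt{m_\infty/n})$ as well, completing the proof.

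The main obstacle is precisely this variance estimate. The cruder bound $|\la e,H^ke\ra|\le\|e\|_2^2\|H\|^k$ used in Section~\ref{sub:EA} only delivers $\smallo(1)$ for the corresponding sum --- far short of $\smallo(\sqrt{m_\infty/n})$ --- and the whp bound of Lemma~\ref{lemma:exppowers} loses a factor $(\log n)^{k\xi}$ which, already for $k=3$, need not be dominated by $m_\infty$ under the bare hypothesis $m_\infty\gg(\log n)^{2\xi}$. So a genuine second-moment computation for $\la e,H^ke\ra$, uniform over $1\le k\le L$, cannot be avoided; once it is in hand the remaining steps are routine bookkeeping.
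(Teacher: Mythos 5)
Your decomposition is essentially the paper's: subtract the fixed-point relation from \eqref{eq:master2}, pull out the factor $(\lambda_1-x_0)$, show the resulting coefficient is $1+\smallo(1)$ (the paper does this by writing $\lambda_1^{-k}-x_0^{-k}$ as a telescoping sum and bounding the resulting coefficient sum; your mean-value-theorem formulation with $|h'(\zeta)|=\bigo(1/m_\infty)$ is a clean restatement of the same estimate), and then control your terms (i)--(iv). The $L^1$ bound by conditioning on the concentration event and beating $\Pro(\mathcal E^c)\le\ee^{-\nu(\log n)^\xi}$ via Cauchy--Schwarz against a polynomial bound on $\Exp[|R_n|^2]$ is also exactly what the paper does.

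The divergence, and the gap, is term (ii). The paper does \emph{not} prove a variance estimate; it controls the centered sum directly with the whp bound of Lemma~\ref{lemma:exppowers}, obtaining $L_n=\bigo\lep m_\infty(\log n)^{2\xi}/(\sqrt{n}\,m_2/m_1)\rip$ and then asserting this is $\smallo(m_3/(m_2\sqrt{m_1}))$. Your suspicion about the $(\log n)^{k\xi}$ loss is well founded: under (D2), $m_3/(m_2\sqrt{m_1})\asymp\sqrt{m_\infty/n}$, and already the $k=2$ term gives a ratio $\asymp(\log n)^{2\xi}/\sqrt{m_\infty}$, which (D1) alone does not force to zero (the binding index is $k=2$, not $k=3$ as you wrote). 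But the remedy you propose, the bound $\Var(\la e,H^ke\ra)\le C^k m_\infty^{k+2}/n$ uniformly over $1\le k\le L$, is precisely the decisive ingredient of your whole argument, and you explicitly leave it unproved. Saying it would follow from ``a walk-counting argument in the spirit of \cite{erdosSpectralStatisticsErdos2013}'' is a plan, not a proof: identifying which pairs of walks contribute at order $m_\infty^{k+2}/n$ and controlling all the others, uniformly in $k$ up to $L=\lfloor\log n\rfloor$, is work of the same nature and difficulty as the paper's Lemma~\ref{lemma:exppowers}. As written, the proposal is therefore incomplete at exactly the step it hinges on, and your concluding claim that a second-moment computation ``cannot be avoided'' is an unverified assertion about the paper's route that you should either prove or drop.
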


\begin{proof}[{\bf Proof of Theorem \ref{thm:CLTlambda} (II)}]
From the previous lemmas we have
\[
\lambda_1= x_0+ \frac{\la e, H e\ra}{m_2/m_1}+ R_n.
\]
Therefore
\[
\Exp[\lambda_1]= x_0+ \Exp[R_n]
\]
and
\[
\lambda_1- \Exp[\lambda_1]= \frac{\la e, H e\ra}{m_2/m_1}+ \smallo\lep\frac{m_3}{m_2\sqrt{m_1}}\rip.
\]
Hence 
\begin{equation}
\label{eq:clt-focus}
\frac{m_2}{m_1}\left(\lambda_1- \Exp[\lambda_1]\right)= \la e, H, e\ra + \smallo\lep \frac{m_3}{m_1^{3/2}}\rip.
\end{equation}

Observe that
$$
\la e, H e\ra = \sum_{i,j=1}^N h_{i,j} \frac{d_i d_j}{m_1}= 2\sum_{i\le j}h_{i,j} \frac{d_i d_j}{m_1}
$$
Let 
$$
\sigma_1^2= \sum_{i\le j} \mathrm{Var}\lep \frac{2}{m_1}h_{i,j} d_i d_j\rip=\sum_{i\le j} 
\frac{4d_i^3d_j^3}{m_1^3}\left(1- \frac{d_id_j}{m_1}\right)\sim 2 \frac{m_3^2}{m_1^3} 
\lep 1+\bigo\lep\frac{m_\infty^2}{n}\rip\rip,
$$
where we use the symmetry of the expression in the last equality. We can apply Lyapunov's central limit theorem, because $\{h_{i,j}\colon\, i\le j\}$ is an independent collection of random variables and Lyapunov's condition is satisfied, i.e., 
\begin{align*}
\lim_{n \to \infty} \frac{1}{\sigma_n^3} \sum_{i > j} \bbe \left[ \left| H(i,j) d_i d_j \right|^3\right]
\leq K\lim_{n \to \infty} \frac{m_1^{3/2}}{m_3^3}\frac{m_4^2}{m_1} =0,
\end{align*}
where $K$ is a constant that does not depend on $n$. Hence
$$ 
\frac{m_1^{3/2}\la e, H e\ra}{\sqrt{2} m_3}\overset{w}\longrightarrow N(0,1).
$$

Returning to the eigenvalue equation in \eqref{eq:clt-focus} and dividing by $\sigma_1$, we have
$$
\frac{\sqrt{m_1}{m_2}}{m_3} \left( \lambda_1- \E[ \lambda_1]\right)
= \frac{m_1^{3/2}\la e, H e\ra }{ m_3}+ \smallo(1)\overset{w}\longrightarrow N(0, 2).
$$
\end{proof}

We next prove Lemma \ref{lemma:crucial}, on which the proof of the central limit theorem relied. 

\begin{proof}
Note that by \eqref{eq:master2} and \eqref{eq:fixedpt} we can write
\begin{equation}
\label{eq:subtract1}
\lambda_1-x_0= \frac{\la e, H e\ra}{\lambda_1}+\sum_{k=2}^{L} \Exp\la e, H^k e\ra
\left( \frac{1}{\lambda_1^k}- \frac{1}{x_0^k}\right)+L_n,
\end{equation}
where
$$
L_n=\sum_{k=2}^L \frac{ \la e, H^k e\ra- \Exp \la e, H^k e\ra}{\lambda_1^k}+\sum_{k>L} 
\frac{\la e, H^k e\ra}{\lambda_1^k}.
$$
Thanks to Lemma~\ref{lem:concentrationlambda}, Lemma \ref{lemma:exppowers} and \eqref{eq:tailsum} we have
$$
L_n = \bigo\left(\frac{m_\infty(\log n)^{2\xi}}{\sqrt{n}m_2/m_1}\right).
$$ 
Note that $L_n= \smallo( \frac{m_3}{m_2 \sqrt{m_1}})$. Indeed, using $m_3\ge n m_0^3$ and Assumption \ref{ass:deg}(D1), we get
\[
\frac{m_\infty(\log n)^{2\xi} m_2\sqrt{m_1}}{\sqrt{n}(m_2/m_1) m_3}
\le \frac{m_{\infty}^{5/2} n^{3/2} (\log n)^{2\xi}}{\sqrt{n} m_0  n m_0^3 (\log n)^\xi}=\frac{ m_{\infty}^{5/2}(\log n)^{\xi}}{m_0^4 }
=\bigo\lep \frac{(\log n)^{\xi}}{m_0^{3/2}}\rip.
\]

Observe that \eqref{eq:subtract1} can be rearranged as 
\[
(\lambda_1-x_0)= \frac{\la e, He\ra}{\lambda_1}-\sum_{k=2}^{L} (\lambda_1-x_0)
\Exp\la e, H^ke\ra\lambda_1^{-k} x_0^{-k} \sum_{j=0}^{k-1} x_0^{k-1-j}+ L_n.
\]
Hence, bringing the second term from the right to the left, we have
\[
(\lambda_1-x_0)\left[ 1+ \sum_{k=2}^{L} \Exp\la e, H^ke\ra\lambda_1^{-k} x_0^{-k}
\sum_{j=0}^{k-1} x_0^{k-1-j}\right]= \frac{\la e, H e\ra}{\lambda_1} + L_n.
\]
Using the bounds on $\lambda_1$ and $x_0$, we get
\begin{align*}
&\left|\sum_{k=2}^{L} \Exp\la e, H^ke\ra\lambda_1^{-k} x_0^{-k} \sum_{j=0}^{k-1} x_0^{k-1-j}\right|
\le \sum_{k=2}^{L} \frac{k}{(m_2/m_1)^{k+1}} \Exp\la e, H^ke\ra\\
&\le \sum_{k=2}^{L} \frac{k}{(m_2/m_1)^{k+1}}\frac{m_2}{m_1} (Cm_\infty)^{k/2}
= \bigo\lep\frac{m_{\infty}}{(m_2/m_1)^2(\log n)^{2\xi-1}}\rip=\smallo(1).
\end{align*}
We can therefore write 
\[
\lambda_1-x_0= \frac{\la e,H e\ra}{\lambda_1}+L_n,
\]
where $L_n= \smallo_P( \frac{m_3}{m_2 \sqrt{m_1}})$. Finally, to go to $R_n$, note that
\begin{equation}
R_n= \lambda_1-x_0- \frac{\la e, H e\ra} {m_2/m_1} =\la e, H e\ra \left( \frac{1}{\lambda_1}- \frac{1}{m_2/m_1}\right)+ L_n.
\end{equation}
To bound $R_n$, it is enough to show that the first term on the right-hand side is whp bounded by $\frac{m_3}{m_2 \sqrt{m_1}}$. Using Lemma~\ref{lemma:exppowers} (for $k=1$) and \eqref{eq:normH2}, we have whp
\begin{equation}
\label{eq:firstbound}
\frac{\left| \la e, H e\ra \right| |\lambda_1- m_2/m_1|}{\lambda_1 m_2/m_1}
\le \frac{\sqrt{m_\infty}(\log n)^{\xi}}{\sqrt{n}}\frac{\sqrt{m_{\infty}}}{(m_2/m_1)}.
\end{equation}
Using again Assumption \ref{ass:deg}(D1), $m_3\ge n m_0^3$, $m_1\le n m_\infty$ and $m_2\le n m_{\infty}^2$, we get that
\[
\frac{m_\infty (\log n)^{\xi}}{\sqrt{n}(m_2/m_1)}\frac{m_2\sqrt{m_1}}{m_3}\le \left(\frac{m_\infty}{m_0}\right)^3
\frac{c}{\sqrt{m_{\infty}}}=\smallo(1).
\]
This controls the right-hand side of \eqref{eq:firstbound}, and hence $R_n=\smallo( \frac{m_3}{m_2\sqrt{m_1}})$ whp.

We want to show that the latter is negligible both pointwise and in expectation. We already have that this is so whp on $R_n$. We want to show that the same bound holds in expectation. Let $\mathcal{A}$ be the high probability event of Lemma \ref{lem:concentrationlambda} and \ref{lemma:exppowers}, and write
\[
\Exp[|R_n|] =  \Exp[|R_n| \textbf{1}_{\mathcal{A}^c}] + \Exp[|R_n|\textbf{1}_{\mathcal{A}}],
\]
where $\textbf{1}_{\mathcal{A}}$ is the indicator function of the event $\mathcal{A}$. Since all the bounds hold on the high probability event $\mathcal A$, it is immediate that
\[
\Exp[|R_n|\textbf{1}_{\mathcal{A}}]
=\smallo\lep\frac{m_3}{\sqrt{m_1} m_2}\rip.
\]
The remainder can be bounded via the Cauchy-Schwarz inequality, namely,
\[
\Exp[|R_n| \textbf{1}_{\mathcal{A}^c}] \leq \left(\Exp[|R_n|^2]\Exp[\textbf{1}_{\mathcal{A}^c}]\right)^{\frac{1}{2}}\leq\lep\Exp\left[|R_n|^2\right]e^{-\nu(\log n)^\xi}\rip^{\frac{1}{2}}.
\]
We see that if $\Exp[|R_n|^2]=\smallo(\ee^{-\nu(\log n)^\xi})$, then we are done. Expanding, we see that
\[
\Exp[|R_n|^2] = \Exp\left[\left|\lambda_1 - x_0 - \frac{\la e, H e\ra}{m_2/m_1} \right|^2\right] \le n^C
\]
for some $C>0$, where we use that 
\begin{align*}
\Exp[(\lambda_1^2)] \leq \Exp[\Tr A^2] = \sum_{i,j=1}^N \Exp[(A(i,j))^2] \leq m_\infty n
\end{align*} 
and the trivial bound $|\la e, He\ra|\leq n^{C_*}$ for some $C_*<C$. Hence we have $\left(\Exp[|R_n|^2]\Exp[\textbf{1}_{A^c}]\right)^{\frac{1}{2}} \le \ee^{-\nu(\log n)^{\xi}}$ and 
\[
\Exp[|R_n|] =\smallo\lep\frac{m_3}{\sqrt{m_1} m_2}\rip.
\]
\end{proof}


\section{Proof of Theorem~\ref{thm:CLTv}}
\label{sub:cltvect}

In this section we study the properties of the principal eigenvector. Let $v_1$ be the normalized principal eigenvector, i.e., $\|v_1\|=1$, and let $e$ be as defined in \eqref{eq:def-e}. Recall from \eqref{eq:eigenvector} that
\[
\lambda_1 \left(1-\frac{H}{\lambda_1}\right) v_1= e\langle e,v_1\rangle,
\]
and after inversion (which is possible on the high probability event) we have
\[
v_1=\frac{\langle e,v_1\rangle}{\lambda_1}(1-H/\lambda_1)^{-1} e.
\]
If $K$ denotes the normalization factor, then we can rewrite the above equation whp as the series
\begin{equation}
\label{eq:vector_expansion}
v_1=\frac{K}{\lambda_1}\sum_{k=0}^\infty \frac{H^ke}{\lambda_1^k}.
\end{equation}
Our first step is to determine the value of $K$ in \eqref{eq:vector_expansion}. We adapt the results from \cite{erdosSpectralStatisticsErdos2013} to derive a component-wise central limit theorem in the inhomogeneous setting described by \eqref{eq:chunglu} under Assumption \ref{ass:deg}. By the normalization of $v$,
\begin{equation}
\label{eq:K}
1 = \langle v_1 , v_1\rangle= \frac{K^2}{\lambda_1^2}\left \langle \sum_{k=0}^\infty
\frac{H^k}{\lambda_1^k} e,\sum_{\ell=0}^\infty \frac{H^{\ell}}{\lambda_1^{\ell}} e \right\rangle
= \frac{K^2}{\lambda_1^2}\sum_{k=0}^\infty \frac{(k+1)\left\langle e,{H^k}e\right\rangle}{\lambda_1^k},
\end{equation}
where we use the symmetry of $H$. 

The following lemma settles Theorem \ref{thm:CLTv}(I).

\begin{lemma}
\label{lemma:parallel}
Under Assumption \ref{ass:deg}, and with $\tilde{e}=e\sqrt{\frac{m_1}{m_2}}$, whp
\begin{equation}
\langle \tilde{e},v_1\rangle=1+\smallo(1).
\end{equation}

\end{lemma}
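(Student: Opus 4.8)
The plan is to extract the leading-order behaviour of the normalization constant $K$ from the expansion \eqref{eq:K}, and then use it to compute $\langle \tilde e, v_1\rangle$ directly from the series \eqref{eq:vector_expansion}. First I would analyse the sum $\sum_{k=0}^\infty (k+1)\langle e, H^k e\rangle/\lambda_1^k$ appearing in \eqref{eq:K}. The $k=0$ term is $\langle e,e\rangle = m_2/m_1$. The $k=1$ term vanishes in expectation and is negligible whp by Lemma~\ref{lemma:exppowers}: $|\langle e, He\rangle|/\lambda_1 = \bigo(\sqrt{m_\infty}(\log n)^\xi/(\sqrt n\, m_2/m_1)) = \smallo(m_2/m_1)$. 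For $2\le k\le L$, by Lemma~\ref{lemma:expectationbounds} together with the concentration in Lemma~\ref{lemma:exppowers} we have $\langle e, H^k e\rangle \le \tfrac{m_2}{m_1}(Cm_\infty)^{k/2} + \smallo(\cdots)$ whp, so $\sum_{k=2}^L (k+1)\langle e, H^k e\rangle/\lambda_1^k = \bigo\big(\tfrac{m_2}{m_1}\sum_{k\ge 2} (k+1)/(\sqrt{m_\infty})^k\big) = \bigo(m_2/(m_1 m_\infty)) = \smallo(m_2/m_1)$, using $\lambda_1 \asymp m_2/m_1 \asymp m_\infty$ from Lemma~\ref{lem:concentrationlambda} and Remark~\ref{rmk:m2/m1}. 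The tail $k>L$ is handled as in \eqref{eq:tailsum}. Hence $\sum_{k=0}^\infty (k+1)\langle e, H^k e\rangle/\lambda_1^k = \tfrac{m_2}{m_1}(1+\smallo(1))$ whp, so from \eqref{eq:K}, $K^2 = \lambda_1^2 \cdot \tfrac{m_1}{m_2}(1+\smallo(1))$, i.e. $K = \lambda_1\sqrt{m_1/m_2}\,(1+\smallo(1))$ whp (choosing the sign so that $\langle e,v_1\rangle>0$).

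Next I would compute $\langle \tilde e, v_1\rangle$. From \eqref{eq:vector_expansion} and $\tilde e = e\sqrt{m_1/m_2}$,
\[
\langle \tilde e, v_1\rangle = \sqrt{\frac{m_1}{m_2}}\,\frac{K}{\lambda_1}\sum_{k=0}^\infty \frac{\langle e, H^k e\rangle}{\lambda_1^k}.
\]
The series $\sum_{k=0}^\infty \langle e, H^k e\rangle/\lambda_1^k$ is exactly $\lambda_1$ by \eqref{eigenvalue2} (equivalently by the computation in Section~\ref{sub:EA}, it equals $\langle e,e\rangle + \smallo(1) = \tfrac{m_2}{m_1}(1+\smallo(1))$ whp, since the $k=1$ term vanishes in expectation and is $\smallo$ whp, the $k=2$ term is $\smallo(m_2/m_1)$, etc.). Using this and the value of $K$,
\[
\langle \tilde e, v_1\rangle = \sqrt{\frac{m_1}{m_2}}\cdot \sqrt{\frac{m_1}{m_2}}(1+\smallo(1)) \cdot \frac{1}{\lambda_1}\cdot \lambda_1 \cdot \frac{m_2}{m_1}(1+\smallo(1)) \cdot \frac{1}{\lambda_1}\lambda_1,
\]
which after cancelling gives $\langle \tilde e, v_1\rangle = 1 + \smallo(1)$ whp. (More carefully: $\tfrac{K}{\lambda_1} = \sqrt{m_1/m_2}(1+\smallo(1))$ and $\sum_k \langle e,H^ke\rangle/\lambda_1^k = \lambda_1$, while $\lambda_1\sqrt{m_1/m_2} \cdot \sqrt{m_1/m_2} = \lambda_1 m_1/m_2 = 1+\smallo(1)$ by Lemma~\ref{lem:concentrationlambda} since $\lambda_1 = (m_2/m_1)(1+\smallo(1))$ whp.)

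The main obstacle I anticipate is bookkeeping the error terms consistently across the two series — the one defining $K^2$ in \eqref{eq:K} (weighted by $k+1$) and the one defining $\lambda_1$ in \eqref{eigenvalue2} — and making sure all the $\smallo(1)$ claims are simultaneously valid on a single high-probability event. This requires intersecting the events from Lemma~\ref{lem:concentrationlambda}, Lemma~\ref{lemma:exppowers} (over all $1\le k\le L$), and Lemma~\ref{lemma:expectationbounds}, and checking that each contributes an error of smaller order than the main term $m_2/m_1$; the extra factor of $k+1$ in \eqref{eq:K} is harmless because the geometric decay $(m_\infty)^{-k/2}$ dominates. A minor point is fixing the sign ambiguity of $v_1$: one declares $v_1$ to be the unit eigenvector with $\langle e, v_1\rangle \ge 0$, which forces the positive square root for $K$ and is consistent with $\langle \tilde e, v_1\rangle \to +1$.
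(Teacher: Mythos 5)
Your proof is correct and takes essentially the same route as the paper's: you estimate the normalization constant $K$ from the identity \eqref{eq:K} by showing the $k=0$ term $\langle e,e\rangle=m_2/m_1$ dominates, obtaining $K/\lambda_1=\sqrt{m_1/m_2}\,(1+\smallo(1))$, and then combine this with \eqref{eq:vector_expansion} and the eigenvalue series \eqref{eigenvalue2} (equivalently, the identity $K=\langle e,v_1\rangle$) together with $\lambda_1/(m_2/m_1)\to 1$ from Lemma~\ref{lem:concentrationlambda} to conclude $\langle\tilde e,v_1\rangle=\lambda_1 m_1/m_2\,(1+\smallo(1))=1+\smallo(1)$. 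The only blemish is the intermediate display, which has spurious $\tfrac{1}{\lambda_1}\lambda_1$ factors and substitutes $m_2/m_1$ where the series actually sums to $\lambda_1$; your parenthetical \enquote{more carefully} restatement fixes this and is what should stand as the argument.
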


\begin{proof}
Recall that $L=\lfloor \log n\rfloor$. We rewrite \eqref{eq:K} as
\begin{equation}
\label{eq:K_2}
\begin{split}
\lep\frac{\lambda_1}{K}\rip^2
&= \sum^{L}_{k= 0}\frac{(k+1)}{\lambda_1^k}\,\Exp\left[\left\langle e, H^k e\right\rangle\right]
+\sum_{k=1}^{L} \frac{(k+1)}{\lambda_1^k}\left|\left\langle e, H^k e\right\rangle
-\Exp\left[\left\langle e,H^k e\right\rangle\right]\right| \\
&\qquad \qquad +\sum_{k=L+1}^\infty \frac{(k+1)}{\lambda_1^k}\left\langle e, H^k e\right\rangle.
\end{split}
\end{equation}
We first show that the last two parts are negligible and then show that the main term of the first part is the term with $k=0$, i.e., $\langle e,e\rangle=m_2/m_1$. 

The last term in \eqref{eq:K_2} is dealt with as follows. Using \eqref{eq:normbound}, we have whp
\[
\begin{split}
\sum_{k=L+1}^\infty \frac{(k+1)}{\lambda^k}\left\langle e,H^ke\right\rangle
&\leq\sum_{k=L+1}^\infty (k+1) \frac{\|e\|^2\|H\|^k}{(m_2/m_1)^k}
\leq \frac{m_2}{m_1}\sum_{k=L+1}^\infty (k+1) (1-C_0)^k\\
&\leq\frac{m_2}{m_1} (\log n+2) \ee^{-c'\log n}\frac{1}{C_0^2}
\end{split}
\]
with $c'=-\log(1-C_0)$, where we use that $\sum_{k=0}^\infty (k+1)(1-c)^k=1/c^2$ for $|1-c|<1$.

We tackle the second sum in \eqref{eq:K_2} by using Lemma \ref{lemma:exppowers}. Indeed, whp we have 
\[
\begin{split}
\sum_{k=1}^L \frac{(k+1)}{\lambda_1^k}\left|\left\langle e, H^k e\right\rangle
-\Exp\left[\left\langle e,H^ke\right\rangle\right]\right|
&\leq\sum_{k=1}^L (k+1) \frac{Cm_\infty^{k/2}(\log n)^{k\xi}}{\sqrt{n}}\lep\frac{m_2}{m_1}\rip^{1-k}\\
&\leq\frac{C'\sqrt{m_\infty}(\log n)^\xi(\log n +1)}{\sqrt{n}}\leq \frac{C'\sqrt{m_\infty}(\log n)^{2\xi}}{\sqrt{n}},
\end{split}
\]
where the constant varies in each step. By Assumption \ref{ass:deg}(D1), the last term goes to zero.

As to the first term, note that by \eqref{lemma:expectationbounds} for $k\geq3$ we have
\begin{align*}
\sum^{L}_{k=3}\frac{(k+1)}{\lambda_1^k}\Exp\left[\left\langle e, H^k e\right\rangle
\right]&\leq \sum_{k=3}^{L} (k+1) 
\lep\frac{m_2}{m_1}\rip^{-k+1} (Cm_\infty)^{k/2} \\
&\le  \sum_{k=3}^L \frac{C m_{\infty}^{k/2}}{(m_2/m_1)^{(k-1)}}=\bigo\lep\frac{1}{\sqrt{m_\infty}}\rip.
\end{align*}
The term with $k=1$ is zero, while for $k=2$ we have
\[
3\frac{\Exp \langle e,H^2e\rangle}{\lambda^2_1}\leq c \frac{m_1m_3}{m_2^2}= \bigo\lep1\rip
\]
for some constant $c$. After substituting these results into \eqref{eq:K_2}, we find
\begin{equation}\label{K-2bound}
\lep\frac{\lambda_1}{K}\rip^2= \frac{m_2}{m_1}\lep1+\bigo\lep\frac{1}{m_2/m_1}\rip\rip
\end{equation}
and the proof follows by normalizing the vector $e$ and using \eqref{eq:vector_expansion}.
\end{proof}

The following lemma is an immediate consequence of \eqref{eq:vector_expansion} and Lemma \ref{lemma:parallel}.

\begin{lemma}
\label{v_expansion}
Under Assumptions \ref{ass:deg}, whp
\begin{equation}
\label{eq:v_expansion}
v_1 = \lep1+\bigo\lep\frac{m_1}{m_2}\rip\rip \sqrt{\frac{m_1}{m_2}}
\sum_{k=0}^\infty\frac{H^k}{\lambda_1^k}e.
\end{equation}
\end{lemma}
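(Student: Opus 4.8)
The plan is to read off the statement from the series representation \eqref{eq:vector_expansion} together with the estimate \eqref{K-2bound} that is already established inside the proof of Lemma \ref{lemma:parallel}; essentially all the work has been done there, so the argument is short. First I would fix the sign ambiguity of the principal eigenvector: on the high-probability event we have $\langle e, v_1\rangle \neq 0$ (this is exactly what legitimised \eqref{eq:eigenvector}), so we may orient $v_1$ so that $K := \langle e, v_1\rangle > 0$. Since also $\lambda_1 > 0$ whp by Lemma \ref{lem:concentrationlambda}, the prefactor $K/\lambda_1$ in \eqref{eq:vector_expansion} is a positive scalar.

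Next I would turn the bound on $(\lambda_1/K)^2$ into a bound on $K/\lambda_1$. Equation \eqref{K-2bound} states that whp
\[
\lep\frac{\lambda_1}{K}\rip^2 = \frac{m_2}{m_1}\lep1+\bigo\lep\frac{m_1}{m_2}\rip\rip.
\]
Because $m_1/m_2 \asymp 1/m_\infty \to 0$ by Assumption \ref{ass:deg}(D1) (cf.\ Remark \ref{rmk:m2/m1}), the elementary expansions $\sqrt{1+x}=1+\bigo(x)$ and $(1+x)^{-1}=1+\bigo(x)$ as $x\to0$ give, after taking the positive square root and inverting,
\[
\frac{K}{\lambda_1} = \sqrt{\frac{m_1}{m_2}}\,\lep1+\bigo\lep\frac{m_1}{m_2}\rip\rip \qquad \text{whp}.
\]

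Finally I would substitute this into \eqref{eq:vector_expansion}. The series $\sum_{k\ge0} H^k e/\lambda_1^k$ converges whp, since $\|H\|/\lambda_1 \le 1-C_0 < 1$ by Remark \ref{rem:obs}(b), so the rearrangement is justified and one gets
\[
v_1 = \frac{K}{\lambda_1}\sum_{k=0}^\infty \frac{H^k e}{\lambda_1^k} = \lep1+\bigo\lep\frac{m_1}{m_2}\rip\rip\sqrt{\frac{m_1}{m_2}}\sum_{k=0}^\infty \frac{H^k e}{\lambda_1^k},
\]
which is \eqref{eq:v_expansion}. I do not expect a genuine obstacle: the substance is entirely in Lemma \ref{lemma:parallel}, and the only points deserving a word of care are the sign normalisation of $v_1$ and the passage from the squared quantity $(\lambda_1/K)^2$ to $K/\lambda_1$, both routine. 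To be scrupulous one should record that the $\bigo$ and the convergence estimate are uniform over the single high-probability event on which both \eqref{eq:normbound} and \eqref{K-2bound} hold, so that the concluding ``whp'' is legitimate.
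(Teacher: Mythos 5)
Your proposal is correct and coincides with what the paper implicitly does — the paper simply declares the lemma ``an immediate consequence of \eqref{eq:vector_expansion} and Lemma \ref{lemma:parallel}'' and you have supplied exactly the routine bookkeeping (sign normalisation of $v_1$, inverting the bound \eqref{K-2bound} to obtain $K/\lambda_1 = \sqrt{m_1/m_2}\,(1+\bigo(m_1/m_2))$, and substituting into \eqref{eq:vector_expansion} using the convergence guaranteed by \eqref{eq:normbound}). No gap or divergence from the intended argument.
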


In order to estimate how the components of $v_1$ concentrate, we need the following lemma.

\begin{lemma}
\label{lem:conccomponent}
For $1 \leq k \leq L$, whp
\[
|H^k e(i)| =\left| \frac{1}{\sqrt{m_1}}
\sum_{i_1,\dots,i_k}h_{i i_1}h_{i_1i_2}\dots h_{i_{k-1}i_k}d_{i_k}\right| 
\leq\frac{m_\infty}{\sqrt{m_1}}\lep(\log n)^{\xi} \sqrt{m_\infty} \rip^k.
\]
\end{lemma}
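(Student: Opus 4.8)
The plan is to control the random sum defining $H^k e(i)$ entrywise by adapting the concentration argument from \cite[Lemma 6.5]{erdosSpectralStatisticsErdos2013} that underlies Lemma \ref{lemma:exppowers}, but now tracking a single coordinate rather than the bilinear form $\langle e, H^k e\rangle$. Writing
\[
H^k e(i) = \frac{1}{\sqrt{m_1}}\sum_{i_1,\dots,i_k} h_{ii_1}h_{i_1i_2}\cdots h_{i_{k-1}i_k}\, d_{i_k},
\]
I would first estimate the expectation of even moments $\Exp\big[(H^k e(i))^{2p}\big]$ by expanding into a sum over $2p$ walks of length $k$ and using that $\Exp[h_{ab}]=0$, so that only terms in which every edge variable appears at least twice survive. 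This is exactly the combinatorial bookkeeping from \cite{erdosSpectralStatisticsErdos2013}: one counts the number of ways $2pk$ edge slots can be paired (or higher-multiplicity grouped) consistently with the walk structure emanating from the fixed root $i$, and bounds each term using $|h_{ab}|\le 1$, $\Exp[h_{ab}^2]\le p_{ab}\le m_\infty^2/m_1$, and the factor $d_{i_k}\le m_\infty$. The number of free vertices in such a constrained family of walks is at most $pk+1$, giving a crude bound of order $(m_2/m_1)\,(Cm_\infty)^{pk}\,(pk)^{Cpk}$ after accounting for the $n^{pk+1}$ vertex choices cancelling against the $(m_\infty^2/m_1)^{pk}$ from the second moments; the polynomial-in-$pk$ combinatorial factor is what ultimately produces the $(\log n)^{\xi}$ per power.

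Next I would convert this moment bound into a high-probability statement via Markov's inequality: choosing $p$ growing like a suitable power of $\log n$ (as in \cite{erdosSpectralStatisticsErdos2013}, tuned so that $(pk)^{Cpk}\le (\log n)^{\xi pk}$ for $k\le L=\lfloor\log n\rfloor$), one gets
\[
\Pro\!\left(|H^k e(i)| > \frac{m_\infty}{\sqrt{m_1}}\big((\log n)^\xi \sqrt{m_\infty}\big)^k\right)
\le \frac{\Exp[(H^k e(i))^{2p}]}{\big(\tfrac{m_\infty}{\sqrt{m_1}}((\log n)^\xi\sqrt{m_\infty})^k\big)^{2p}}
\le \ee^{-\nu(\log n)^\xi}
\]
for $n$ large, uniformly over $1\le k\le L$ (a union bound over the at most $L$ values of $k$ costs only a polynomial factor, absorbed into $\nu$). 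The scaling in the target bound is natural: $\|H\|\lesssim\sqrt{m_\infty}$ whp by \eqref{eq:normH2}, so $\|H^k e\|_2 \lesssim (\sqrt{m_\infty})^k\|e\|_2 = (\sqrt{m_\infty})^k\sqrt{m_2/m_1}$, and the claimed entrywise bound is the $\ell^2$ bound inflated by $((\log n)^\xi)^k$ and by $\sqrt{m_\infty^2/m_2}\asymp 1$ — i.e.\ it says each coordinate is not much larger than the "typical" size, which is precisely the kind of delocalization estimate the moment method delivers.

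The main obstacle is the combinatorial enumeration of the constrained walk families with a sharp enough count of free vertices. Unlike the trace or the bilinear form $\langle e, H^k e\rangle$ (which is a sum over closed-ish walks where both endpoints are weighted and summed), here the walk starts at a \emph{fixed} root $i$ and ends at a summed, $d$-weighted endpoint $i_k$; one must verify that fixing the root does not create extra free vertices and that the endpoint weight $d_{i_k}\le m_\infty$ contributes exactly one extra factor of $m_\infty$ (matching the prefactor $m_\infty/\sqrt{m_1}$ rather than $\sqrt{m_2/m_1}$). I expect this to be a routine but careful adaptation of the graph-counting lemma in \cite{erdosSpectralStatisticsErdos2013}, relying on Assumption \ref{ass:deg}(D2) ($m_0\asymp m_\infty$) to convert between $m_k$'s and powers of $m_\infty$ freely; accordingly I would state the bound and refer to \cite{erdosSpectralStatisticsErdos2013} for the moment computation, indicating only the modifications needed to handle the fixed root and the weighted endpoint.
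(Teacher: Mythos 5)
Your approach --- expanding $H^k e(i)$ into weighted walks rooted at the fixed vertex $i$, bounding high moments via the pairing/grouping combinatorics of \cite{erdosSpectralStatisticsErdos2013}, and converting to a tail bound with a $p\sim(\log n)^{c}$ Markov argument --- is exactly what the paper intends: the published text contains no proof, only a pointer to \cite[Lemma 7.10]{erdosSpectralStatisticsErdos2013} and to the already-established Lemma~\ref{lemma:exppowers}, so your sketch is the right fleshing-out of that reference, and you correctly identify the one genuine modification (fixed root versus $d$-weighted summed endpoint). One small slip in your sanity check: under Assumption~\ref{ass:deg}(D2) we have $m_2\asymp n\,m_\infty^2$, so $\sqrt{m_\infty^2/m_2}\asymp 1/\sqrt{n}$, not $\asymp 1$; the interpretation you then give (each coordinate is about $1/\sqrt{n}$ times the $\ell^2$ size, up to $(\log n)^{\xi k}$ factors, i.e.\ delocalization) is nevertheless correct once that constant is fixed.
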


\noindent 
The proof of this lemma is a direct consequence of Lemma \ref{lemma:exppowers}, is similar to \cite[Lemma 7.10]{erdosSpectralStatisticsErdos2013} and therefore we skip it. An immediate corollary of the above estimate is the delocalized behaviour of the largest eigenvector stated in Theorem \ref{thm:CLTv}(II).

\begin{lemma}
Let $v_1$ be the normalized principal eigenvector, and $\tilde{e}=e\sqrt{\frac{m_1}{m_2}}$. Then whp 
$$
\|v_1- \tilde{e}\|_{\infty} \leq \bigo\left(\frac{(\log n)^{\xi}}{\sqrt{n m_{\infty}}}\right).
$$
\end{lemma}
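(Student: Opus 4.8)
The plan is to control $\|v_1-\tilde e\|_\infty$ directly from the series expansion in Lemma \ref{v_expansion} and the component-wise bounds in Lemma \ref{lem:conccomponent}. First I would use \eqref{eq:v_expansion} to write, whp,
\[
v_1(i) = \lep1+\bigo\lep\tfrac{m_1}{m_2}\rip\rip\sqrt{\tfrac{m_1}{m_2}}\,\sum_{k=0}^\infty \frac{(H^ke)(i)}{\lambda_1^k},
\]
and observe that the $k=0$ term contributes $\sqrt{m_1/m_2}\,e(i) = d_i/\sqrt{m_2} = \tilde e(i)$ up to the multiplicative $1+\bigo(m_1/m_2)$ factor. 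So the difference $v_1(i)-\tilde e(i)$ splits into (a) the $k=0$ term times the error factor $\bigo(m_1/m_2)$, and (b) the tail $k\geq 1$, again times $1+\bigo(m_1/m_2)$.

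For part (a): since $|\tilde e(i)| = d_i/\sqrt{m_2} \leq m_\infty/\sqrt{m_2} \asymp \sqrt{m_\infty}/\sqrt{m_\infty} \cdot$ (using Remark \ref{rmk:m2/m1}, $m_2/m_1\asymp m_\infty$, hence $m_2\asymp n m_\infty^2/\ldots$; more simply $d_i/\sqrt{m_2}\le m_\infty/\sqrt{n m_0^2}=\bigo(1/\sqrt n)$ since $m_2\ge n m_0^2\asymp n m_\infty^2$), the contribution is $\bigo(m_1/m_2)\cdot\bigo(1/\sqrt n) = \smallo\lep (\log n)^\xi/\sqrt{n m_\infty}\rip$, because $m_1/m_2 \asymp 1/m_\infty$ and this is more than enough decay. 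For part (b): using Lemma \ref{lem:conccomponent} for $1\le k\le L$ and the crude tail bound $\|H\|\le Cm_\infty$ (from \eqref{eq:normH2}) together with $|e(i)|\le m_\infty/\sqrt{m_1}$ for $k>L$, I bound
\[
\sqrt{\tfrac{m_1}{m_2}}\sum_{k\ge1}\frac{|(H^ke)(i)|}{\lambda_1^k}
\le \sqrt{\tfrac{m_1}{m_2}}\Bigg[\sum_{k=1}^{L}\frac{m_\infty}{\sqrt{m_1}}\frac{\lep(\log n)^\xi\sqrt{m_\infty}\rip^k}{(m_2/m_1)^k} + \sum_{k>L}\frac{m_\infty}{\sqrt{m_1}}\frac{(Cm_\infty)^k}{(m_2/m_1)^k}\Bigg].
\]
The first (geometric-type) sum is dominated by its $k=1$ term $\frac{m_\infty}{\sqrt{m_1}}\cdot\frac{(\log n)^\xi\sqrt{m_\infty}}{m_2/m_1}$; multiplying by $\sqrt{m_1/m_2}\asymp 1/\sqrt{m_\infty}$ and using $m_2/m_1\asymp m_\infty$ and $m_1\asymp n m_\infty$ gives order $\frac{1}{\sqrt{m_\infty}}\cdot\frac{m_\infty}{\sqrt{n m_\infty}}\cdot\frac{(\log n)^\xi\sqrt{m_\infty}}{m_\infty} = \bigo\lep(\log n)^\xi/\sqrt{n m_\infty}\rip$, which is exactly the claimed rate. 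The second sum is $\bigo(\ee^{-c\log n})$ by the same computation as in \eqref{eq:tailsum} and is negligible.

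The main obstacle is bookkeeping the many $\asymp$-comparisons cleanly: one must repeatedly convert between $m_\infty$, $m_1\asymp n m_\infty$, $m_2\asymp n m_\infty^2$, $m_2/m_1\asymp m_\infty$ (all valid under Assumption \ref{ass:deg}(D2) via Remark \ref{rmk:m2/m1}), and check that the $k=1$ term genuinely dominates the truncated series — i.e. that the ratio $(\log n)^\xi\sqrt{m_\infty}/(m_2/m_1)\asymp (\log n)^\xi/\sqrt{m_\infty}$ is $\smallo(1)$, which is precisely the lower bound $(\log n)^{2\xi}\ll m_\infty$ in Assumption \ref{ass:deg}(D1). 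Once the dominant term is isolated and shown to match the target rate, and the two error contributions (a) and (b)-tail are shown to be of strictly smaller order, the proof is complete. All statements hold whp in the sense of \eqref{whp} because they rest on Lemmas \ref{lem:concentrationlambda}, \ref{lemma:exppowers}, \ref{lem:conccomponent} and \ref{lemma:parallel}, each of which holds whp.
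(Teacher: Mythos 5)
Your proposal is correct and follows essentially the same route as the paper: expand $v_1(i)$ via the resolvent series, use Lemma \ref{lemma:parallel} (equivalently Lemma \ref{v_expansion}) for the prefactor $K/\lambda_1$, treat the $k=0$ term separately, bound the range $1\le k\le L$ via Lemma \ref{lem:conccomponent} (where the $k=1$ term dominates and produces the claimed rate $(\log n)^\xi/\sqrt{nm_\infty}$), and dismiss the geometric tail.

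Two small slips in your treatment of the tail $k>L$ are worth fixing. First, the bound coming from \eqref{eq:normH2} is $\|H\|\le C\sqrt{m_\infty}$, not $Cm_\infty$; with your stated bound the geometric ratio $\|H\|/\lambda_1\asymp m_\infty/(m_2/m_1)\asymp 1$ would not be bounded away from $1$ and the tail series would not be summable, whereas with $\sqrt{m_\infty}$ the ratio is $\asymp 1/\sqrt{m_\infty}\to 0$, as used in \eqref{eq:tailsum} and \eqref{eq:normbound}. Second, the coordinatewise estimate $|(H^ke)(i)|\le |e(i)|\,\|H\|^k$ is not a valid operator inequality; one should pass through $|(H^ke)(i)|\le \|H^ke\|_\infty\le \|H^ke\|_2\le \|H\|^k\|e\|_2$ with $\|e\|_2=\sqrt{m_2/m_1}\asymp\sqrt{m_\infty}$. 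With these corrections the tail is still super-polynomially small (the ratio raised to the power $L+1\approx\log n$), so the conclusion of your argument is unaffected.
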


\begin{proof}
Recall from \eqref{eq:K_2} that
\[
v_1(i) =\frac{K}{\lambda_1} \sum_{k=0}^\infty \frac{H^ke(i)}{\lambda_1^k}
= \frac{K}{\lambda_1} e(i) + \frac{K}{\lambda_1} \sum_{k=1}^L \frac{H^ke(i)}{\lambda_1^k}
+\frac{K}{\lambda_1} \sum_{k=L+1}^\infty \frac{H^ke(i)}{\lambda_1^k}.
\]
The last term is negligible whp, because it is the tail sum of a geometrically decreasing sequence. For the sum over $1 \leq k \leq L$ fwe can use Lemma \ref{lem:conccomponent} and the fact that $K/\lambda_1= \sqrt{\frac{m_1}{m_2}}+ \smallo(1)$ whp. So we have
$$
\frac{K}{\lambda_1} \sum_{k=1}^L \frac{H^ke(i)}{\lambda_1^k}\le \frac{m_{\infty}}{\sqrt{n}m_0} \frac{(\log n)^{\xi}}{\sqrt{m_{\infty}}}\le \bigo\left(\frac{(\log n)^{\xi}}{\sqrt{nm_{\infty}}}\right).
$$
The first term whp is
$$
\frac{K}{\lambda_1} e(i) = \tilde{e}(i)+ \smallo(1)
$$
and the error is uniform over all $i$. Indeed, whp
\begin{equation}
\label{firsttermK}
\left| \frac{K}{\lambda_1} e(i) - \frac{K}{m_2/m_1} e(i)\right| 
\le \frac{K d_i}{\sqrt{m_1}} \frac{\left|\lambda_1- m_2/m_1\right|}{(m_2/m_1)^2}
\le \sqrt{\frac{m_2}{m_1}}\frac{cm^{3/2}_\infty}{\sqrt{m_0n}} \frac{c'}{m_{\infty}^2} =\bigo\lep\frac{1}{\sqrt{n m_{\infty}}}\rip,
\end{equation}
where we use Assumption \ref{ass:deg}, Remark \ref{rmk:m2/m1} and \eqref{eq:normH2}. Since the detailed computations are similar to the previous arguments, we skip the details. 
\end{proof}

We next prove the central limit theorem for the components of the eigenvector stated in Theorem \ref{thm:CLTv}(IV). 

\begin{theorem}
Under Assumption \ref{ass:deg}, with the extra assumtion $m_{\infty}\gg (\log n)^{4\xi}$,
$$ 
\sqrt{\frac{m_2^3}{d_i m_3 m_1}} \Big(v_1(i)-\frac{d_i}{\sqrt{m_2}}\Big) \overset{w}\rightarrow \mathcal N(0,1).
$$
\end{theorem}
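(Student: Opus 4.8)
The plan is to follow the route of the proof of Theorem~\ref{thm:CLTlambda}(II): in the Neumann expansion for $v_1(i)$ the term of order $k=1$ carries all the Gaussian fluctuations, while everything else is an error term that is small on the relevant scale, which here is of order $(nm_\infty)^{-1/2}$. Starting from Lemma~\ref{v_expansion}, I would split
\[
v_1(i)= \lep1+\bigo\lep\tfrac{m_1}{m_2}\rip\rip\sqrt{\tfrac{m_1}{m_2}}\lep e(i)+\frac{He(i)}{\lambda_1}+\sum_{k=2}^{L}\frac{H^ke(i)}{\lambda_1^k}+\sum_{k>L}\frac{H^ke(i)}{\lambda_1^k}\rip,
\]
and treat the blocks in turn. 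The $k=0$ block gives the centering $\sqrt{m_1/m_2}\,e(i)=d_i/\sqrt{m_2}$, and the multiplicative factor $1+\bigo(m_1/m_2)$ perturbs it by $\bigo_{\Pro}(d_i m_1/m_2^{3/2})$, which is $\bigo_{\Pro}(m_\infty^{-1/2})$ once multiplied by the target scaling $\sqrt{m_2^3/(d_im_3m_1)}$. The $k>L$ block is geometrically negligible by \eqref{eq:normbound}. For $2\le k\le L$ I would use Lemma~\ref{lem:conccomponent}, $|H^ke(i)|\le\frac{m_\infty}{\sqrt{m_1}}((\log n)^{\xi}\sqrt{m_\infty})^k$ whp; since $(\log n)^{2\xi}\ll m_\infty\asymp\lambda_1$ this block is dominated by its $k=2$ term and, after multiplication by $\sqrt{m_1/m_2}$ and by $\sqrt{m_2^3/(d_im_3m_1)}$, is $\bigo((\log n)^{2\xi}/\sqrt{m_\infty})$. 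This tends to $0$ \emph{precisely} under the hypothesis $m_\infty\gg(\log n)^{4\xi}$, which is the reason for that strengthening; alternatively, Theorem~\ref{thm:CLTv}(III) says that $\Exp[v_1(i)]-d_i/\sqrt{m_2}=\bigo((\log n)^{2\xi}/\sqrt{m_2})$, which under the same hypothesis is $\smallo$ of the fluctuation scale, so one could equally center at $\Exp[v_1(i)]$.

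After discarding these blocks, what is left is $\sqrt{m_1/m_2}\,He(i)/\lambda_1$. I would then replace $\lambda_1$ by $m_2/m_1$; by Lemma~\ref{lem:concentrationlambda} (so that $|\lambda_1-m_2/m_1|=\bigo(\sqrt{m_\infty})$ whp) together with a whp bound on $He(i)$ from Lemma~\ref{lem:conccomponent}, the resulting error is again $\smallo$ of the fluctuation scale. Using $He(i)=m_1^{-1/2}\sum_jH(i,j)d_j$ and the identity $\sqrt{m_2^3/(d_im_3m_1)}\,(m_1/m_2)^{3/2}=m_1/\sqrt{d_im_3}$, one obtains, on a high-probability event,
\[
\sqrt{\frac{m_2^3}{d_im_3m_1}}\lep v_1(i)-\frac{d_i}{\sqrt{m_2}}\rip=\frac{1}{\sqrt{d_im_3/m_1}}\sum_{j}H(i,j)\,d_j+\smallo_{\Pro}(1).
\]

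The last step is a classical CLT for the linear statistic on the right. For fixed $i$ the variables $\{H(i,j)d_j\}_{j\in[n]}$ are independent and centered with $\sum_j\Var(H(i,j)d_j)=\sum_jd_j^2p_{ij}(1-p_{ij})=s_1^2(i)\sim d_im_3/m_1$; Lyapunov's condition holds since $\sum_j\Exp|H(i,j)d_j|^3\le\sum_jd_j^3p_{ij}=d_im_4/m_1$, which makes the Lyapunov ratio $\bigo(m_\infty^{-1/2})$ by Assumption~\ref{ass:deg}(D2) and the crude bounds $m_1\le nm_\infty$, $m_3\ge nm_0^3$, $m_4\le nm_\infty^4$. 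Hence $s_1(i)^{-1}\sum_jH(i,j)d_j\weak\mathcal N(0,1)$, and since $s_1^2(i)=(d_im_3/m_1)(1+\smallo(1))$ the normalizing constant $\sqrt{d_im_3/m_1}$ may be used instead. Combined with the display above and Slutsky's theorem, this yields the claim.

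The hard part is not any individual estimate but the accounting of errors against the very small fluctuation scale $(nm_\infty)^{-1/2}$: one must verify that the deterministic bias $\Exp[v_1(i)]-d_i/\sqrt{m_2}$ and the $k\ge2$ tail of the Neumann series both lie strictly below this scale, and it is exactly this that forces $m_\infty\gg(\log n)^{4\xi}$ rather than the weaker lower bound in Assumption~\ref{ass:deg}(D1). A secondary point that requires care is that the normalization $K/\lambda_1$ from \eqref{eq:vector_expansion} and the $\lambda_1$'s in the denominators are themselves random; one has to check, using Lemma~\ref{lemma:parallel} and Lemma~\ref{lem:concentrationlambda}, that these contribute only lower-order corrections and do not perturb the Gaussian limit.
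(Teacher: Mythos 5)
Your proposal is correct and follows essentially the same route as the paper: a Neumann expansion of $v_1(i)$ in powers of $H/\lambda_1$, identification of the $k=1$ term as the source of the Gaussian fluctuations, use of Lemma~\ref{lem:conccomponent} and the whp spectral-norm bound to dismiss the $2\le k\le L$ and $k>L$ blocks, replacement of the random quantities $K/\lambda_1$ and $\lambda_1$ by their deterministic leading orders, and a classical CLT for the independent linear statistic $\sum_j H(i,j)d_j$. The only cosmetic difference is that you verify Lyapunov's condition directly while the paper checks the Lindeberg condition, and that the paper first computes $\Exp[v_1(i)]$ explicitly (via conditioning on the high-probability event) before recentering, whereas you argue that the bias $\Exp[v_1(i)]-d_i/\sqrt{m_2}$ lies below the fluctuation scale; these are equivalent bookkeeping choices.
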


\begin{proof}
First we compute $\Exp[v_1(i)]$, and afterwards we show that the CLT holds componentwise. 

We use the law of total expectation. Conditioning on the high probability event $\mathcal E$ in Lemma \ref{lem:concentrationlambda}, we can write the expectation of the normalized eigenvector $v_1$ as
\[
\Exp[v_1(i)]=\Exp[v_1(i)|\mathcal{E}]\,\Pro(\mathcal{E})+\Exp[v_1(i)|\mathcal{E}^c]\,\Pro(\mathcal{E}^c).
\]
Because the components of a normalized $n$-dimensional vector are bounded, we know that
\[
\Exp[v_1(i)]=\Exp[v_1(i)|\mathcal{E}]\,\Pro(\mathcal{E})+\bigo\lep e^{-c_\nu (\log n)^\xi}\rip
\]
for some suitable constant $c_\nu>0$, dependent on $\nu$ and on the the bound on $v_1(i)$. On $\mathcal{E}$, we can expand $v_1$ as
\begin{align*}
v_1(i) = \frac{K}{\lambda_1} \lep e(i)
+ \frac{(He)(i)}{\lambda_1}+\frac{(H^2e)(i)}{\lambda_1^2}
+ \sum_{k=3}^\infty \frac{(H^ke)(i)}{\lambda_1^k} \rip.
\end{align*}
Using the notation $\Exp_{\mathcal E}$ for the conditional expectation on the event $\mathcal E$, we have
\begin{align*}
&\Exp_\mathcal{E}[v_1(i)]=\Exp_\mathcal{E}\left[\frac{K}{\lambda_1}  e(i)\right]
+ \Exp_\mathcal{E}\left[\frac{K}{\lambda_1} \frac{(He)(i)}{\lambda_1}\right]+
+ \Exp_\mathcal{E}\left[\frac{K}{\lambda_1} \sum_{k=2}^\infty \frac{(H^ke)(i)}{\lambda_1^k} \right].
\end{align*}

For the first term we have, using \eqref{K-2bound},
\begin{align*}
\Exp_{\mathcal{E}}\left[ \frac{K}{\lambda_1}e_i\right]
&= \Exp_{\mathcal{E}}\left[ \frac{1}{\sqrt{m_2/m_1}}e_i\right]+\bigo\lep\frac{d_i}{\sqrt{m_1} (m_2/m_1)^{3/2}}\rip
=\frac{d_i}{\sqrt{m_2}} +\bigo\lep \frac{d_i}{\sqrt{m_1} (m_2/m_1)^{3/2}}\rip.
\end{align*}
For the term corresponding to $k=1$, we know that $\Exp[(He)(i)]=0$ by construction on the whole space. However, under the event $\mathcal{E}$ we can show that its contribution is exponentially negligible. We have 
\begin{align*}
\Exp_\mathcal{E}\left[\frac{K}{\lambda_1}\frac{(He)(i)}{\lambda_1}\right]
=\Exp_\mathcal{E}\left[\frac{K}{\lambda_1}\frac{\sum_jh_{ij}d_j}{\sqrt{m_1}\lambda_1}\right]
=\Exp_\mathcal{E}\left[\frac{\lep1+\bigo\lep \frac{1}{m_2/m_1}\rip\rip}{\sqrt{m_2/m_1}}
\lep\frac{\sum_j h_{ij}d_j}{\sqrt{m_1}(m_2/m_1)}\right.\right.\\\left.\left.
+\frac{\sum_j h_{ij}d_j}{\sqrt{m_1}}\frac{|\lambda_1-(m_2/m_1)|}{(m_2/m_1)^2}\rip\right].
\end{align*}
Since $m_2/m_1\to\infty$, there exists a constant $\tilde C$ such that
\[
\frac{\lep1+\bigo\lep1/(m_2/m_1)\rip\rip}{\sqrt{m_2/m_1}}\leq \tilde{C} \frac{1}{\sqrt{m_2/m_1}}.
\]
We can therefore write
\begin{align*}
\Exp_\mathcal{E}\left[\frac{K}{\lambda_1}\frac{\sum_jh_{ij}d_j}{\sqrt{m_1}\lambda_1}\right]
&\leq \tilde{C}\frac{1}{\sqrt{m_2/m_1}}\Exp_\mathcal{E}\left[\frac{\sum_j h_{ij}d_j}{\sqrt{m_1}(m_2/m_1)}
+\frac{\sum_j h_{ij}d_j}{\sqrt{m_1}}\frac{|\lambda_1-(m_2/m_1)|}{(m_2/m_1)^2}\right]\\
&\leq\Exp_\mathcal{E}\left[\frac{\sum_j h_{ij}d_j}{\sqrt{m_1}(m_2/m_1)}\right]
+\Exp_\mathcal{E}\left[\frac{\sum_j h_{ij}d_j}{\sqrt{m_1}}\frac{|\lambda_1-(m_2/m_1)|}{(m_2/m_1)^2}\right]\\
&\leq \Exp_\mathcal{E}\left[\sum_j h_{ij}d_j\right]\lep\frac{1}{\sqrt{m_1}(m_2/m_1)}
+\frac{\sqrt{m_\infty}}{\sqrt{m_1}(m_2/m_1)}\rip.
\end{align*}
Here we use \eqref{eq:normH2} to bound the difference $|\lambda_1-(m_2/m_1)|$. Next, write 
\begin{align*}
0
&=\Exp\left[\sum_j h_{ij}d_j\right]=\Exp_\mathcal{E}\left[\sum_j h_{ij}d_j\right]\Pro(\mathcal{E})
+\Exp_{\mathcal{E}^c}\left[\sum_j h_{ij}d_j\right]\Pro(\mathcal{E}^c)\\
&\leq\Exp_\mathcal{E}\left[\sum_j h_{ij}d_j\right]\Pro(\mathcal{E})+m_1\Pro(\mathcal{E}^c)
=\Exp_\mathcal{E}\left[\sum_j h_{ij}d_j\right]\Pro(\mathcal{E})+\bigo\lep e^{-c_\nu (\log n)^\xi}\rip,
\end{align*}
where $c_\nu$ is a constant depending on $\nu$, and we use that $|h_{ij}|\leq 1$ and $m_1=\bigo\lep e^{3/2\log n}\rip$. We can therefore conclude that
\[
\Exp_\mathcal{E}\left[\frac{(He)(i)}{\lambda_1}\right]=\bigo\lep e^{-c^\prime_\nu (\log n)^\xi}\rip,
\]
where $c^\prime_\nu>0$ is a suitable constant depending on $\nu$, and possibly different from $c_\nu$.

To bound the remaining expectation terms, we use Lemma \ref{lem:conccomponent}, which gives a bound on $(H^ke)(i)$ on the event $\mathcal{E}$. As before, we break up the sum into two contributions:
\begin{align*}
\Exp_\mathcal{E}\left[\frac{K}{\lambda_1} \sum_{k=2}^\infty \frac{(H^ke)(i)}{\lambda_1^k}\right]
=\Exp_\mathcal{E}\left[\frac{K}{\lambda_1} \sum_{k=2}^L \frac{(H^ke)(i)}{\lambda_1^k}\right]
+\Exp_\mathcal{E}\left[\frac{K}{\lambda_1} \sum_{k=L}^\infty \frac{(H^ke)(i)}{\lambda_1^k}\right].
\end{align*}
For the second term we have 
\begin{equation}
\label{eq:vectormoreL}
\begin{aligned}
\sum_{k=L+1}^\infty \frac{\lep H^ke\rip(i)}{\lambda_1^k}
\leq C  \sqrt{\frac{m_2}{m_1}}\,\ee^{-C_c\log n},
\end{aligned}
\end{equation}
where we use \eqref{eq:normbound} and $C_c=|\log (1-C_0)|$. The first term can be bounded via Lemma \ref{lem:conccomponent}, which gives
\begin{equation}
\label{eq:vectorlessL}
\begin{aligned}
\sum_{k=2}^L \frac{(H^ke)(i)}{\lambda_1^k} 
&\leq\sum_{k=2}^L 
\frac{m_\infty \lep(\log n)^\xi \sqrt{m_\infty}\rip^k}{\sqrt{m_1}(m_2/m_1)^k}
=\bigo\lep \frac{(\log n)^{2\xi}}{\sqrt{m_1}}\rip.
\end{aligned}
\end{equation}
Using the above bounds, taking expectations and using \eqref{K-2bound}, we get
\[
\Exp_\mathcal{E}\left[\frac{K}{\lambda_1} \sum_{k=2}^\infty \frac{(H^ke)(i)}{\lambda_1^k}\right]
=\bigo\lep\frac{(\log n)^{2\xi}}{\sqrt{m_2}}\rip.
\]
Thus, we have obtained that
\[
\Exp[v_1(i)]=\frac{d_i}{\sqrt{m_2}}+\bigo\lep\frac{(\log n)^{2\xi}}{\sqrt{m_2}}\rip,
\]
which settles Theorem \ref{thm:CLTv}(III).

We can write
\[
v_1(i)-\frac{d_i}{\sqrt{m_2}}= \frac{\lep 1+\bigo\lep\frac{1}{m_2/m_1}\rip\rip e(i)}{\sqrt{m_2/m_1}} 
-\frac{d_i}{\sqrt{m_2}}+ \frac{K}{\lambda_1} \frac{(He)(i)}{\lambda_1} +\bigo\lep\frac{(\log n)^{2\xi}}{\sqrt{m_2}}\rip,
\]
where we replace the last terms of the expansion of $v_1$ by the bounds derived above (note that these bounds are of the same order as the ones obtained for the same terms in expectation). The first term of the centered quantity $v_1(i)-d_i/\sqrt{m_2}$ is given by  
\[
\frac{\lep 1+\bigo\lep\frac{1}{m_2/m_1}\rip\rip e(i)}{\sqrt{m_2/m_1}}
=\bigo\lep\frac{d_i}{\sqrt{m_1} (m_2/m_1)^{3/2}}\rip.
\] 
This last error can be easily seen to be $\smallo\lep\frac{(\log n)^{2\xi}}{\sqrt{m_2}}\rip$. We can therefore write
\[
v_1(i)-\Exp[v_1(i)]= \frac{K}{\lambda_1} \frac{(He)(i)}{\lambda_1} +\bigo\lep\frac{(\log n)^{2\xi}}{\sqrt{m_2}}\rip.
\]

We proceed to show that the first term on the right-hand side of the above equality gives a CLT when the expression is rescaled by an appropriate quantity, and the error term goes to zero. It turns out that 
\[
s_n^2(i)=\mathrm{Var}\lep\sum_j h_{ij} d_j\rip = \sum_j \frac{d_id^3_j}{m_1}\lep1+\bigo\lep\frac{1}{m_0}\rip\rip
\sim \frac{d_im_3}{m_1}.
\]
Multiplying by $\sqrt{\frac{m_2^3}{d_im_3m_1}}$, we have
\[
\sqrt{\frac{m_2^3}{d_im_3m_1}} \Big(v_1(i)-\langle\tilde{e},v_1\rangle\tilde{e}(i)\Big)
= \frac{1}{s_n}\sum_j h_{ij}d_j +\bigo\lep\sqrt{\frac{m_2^2 (\log n)^{4\xi}}{d_im_3m_1}}\rip.
\]
The error term is 
\[
\sqrt{\frac{m_2^2 (\log n)^{4\xi}}{d_im_3m_1}}=\bigo\lep\frac{(\log n)^{2\xi}}{\sqrt{m_0}}\rip =\smallo(1),
\]
where last inequality follows from the assumption that $m_0\gg(\log n)^{4\xi}$. We now apply Lindeberg's CLT to the term $\frac{\sum_j h_{ij}d_j}{s_n}$. The Lindeberg condition for the CLT reads 
\begin{equation}
\label{eq:Lindeberg}
\lim_{n\to\infty}\frac{1}{s_n^2(i)}\sum_{j}^n\Exp\left[(h_{i j}d_j)^2 \,
\one_{\{|h_{i j}d_j|\geq \epsilon s_n(i) \}}\right]=0.
\end{equation}
Defining $\sigma^2_j(i)=\Var(h_{ij}d_j)$, we note that
\[
\lim_{n\to\infty}\frac{\sigma^2_j(i)}{s^2_n(i)}
=\lim_{n\to\infty}\frac{d_i d_j^3 m_1}{m_1 m_3 d_i}\leq \lim_{n\to\infty}\frac{m_\infty^3}{m_3}
\leq\lim_{n\to\infty}\frac{m_\infty^3}{nm_0^3}=0.
\]
 
Let us finally examine the event
\[
|h_{ij}d_j|\geq \epsilon s_n(i)=\epsilon \sqrt{\frac{d_im_3}{m_1}}\iff|h_{ij}|\geq \epsilon \sqrt{\frac{m_3}{m_1}\frac{d_i}{d_j^2}}.
\]
By definition, $|h_{ij}|<1$. If we show that 
\[
\lim_{n\to\infty} \sqrt{\frac{m_3}{m_1}\frac{d_i}{d_j^2}}=\infty,
\]
then for all $\epsilon>0$ there exists $n_\epsilon$ such that the event
\[
\epsilon\sqrt{\frac{m_3}{m_1}\frac{d_i}{d_j^2}}>1>|h_{ij}|
\]
has probability $1$. Indeed,
\[
\lim_{n\to\infty}\epsilon \sqrt{\frac{m_3}{m_1}\frac{d_i}{d_j^2}}
> \lim_{n\to\infty}\epsilon\sqrt{\frac{nm_0^4}{nm_\infty^3}}
\geq \lim_{n\to\infty}\epsilon\, C\sqrt{m_0} = \infty
\]
for a suitable constant $C$. Thus, \eqref{eq:Lindeberg} holds.
\end{proof}




\begin{thebibliography}{10}

\bibitem{abbe2017community}
E.~Abbe.
\newblock Community detection and stochastic block models: recent developments.
\newblock {\em The Journal of Machine Learning Research}, 18(1):6446--6531,
  2017.

\bibitem{altExtremalEigenvaluesCritical2020}
J.~Alt, R.~Ducatez, and A.~Knowles.
\newblock Extremal eigenvalues of critical {E}rd{\H o}s--{R}{\'e}nyi graphs.
\newblock {\em The Annals of Probability}, 49(3):1347--1401, 2021.

\bibitem{baiCentralLimitTheorems2008}
Z.~Bai and J.-F. Yao.
\newblock Central limit theorems for eigenvalues in a spiked population model.
\newblock {\em Annales de l'Institut Henri Poincar{\'e}, Probabilit{\'e}s et
  Statistiques}, 44(3):447--474, 2008.

\bibitem{baikPhaseTransitionLargest2005}
J.~Baik, G.~Ben~Arous, and S.~P{\'e}ch{\'e}.
\newblock Phase transition of the largest eigenvalue for nonnull complex sample
  covariance matrices.
\newblock {\em The Annals of Probability}, 33(5):1643--1697, Sept. 2005.

\bibitem{benaych-georgesLargestEigenvaluesSparse2017}
F.~Benaych-Georges, C.~Bordenave, and A.~Knowles.
\newblock {Largest eigenvalues of sparse inhomogeneous Erd{\H o}s--R{\'e}nyi
  graphs}.
\newblock {\em The Annals of Probability}, 47(3):1653--1676, 2019.

\bibitem{benaych-georgesSpectralRadiiSparse2021}
F.~Benaych-Georges, C.~Bordenave, and A.~Knowles.
\newblock {Spectral radii of sparse random matrices}.
\newblock {\em Annales de l'Institut Henri Poincar{\'e}, Probabilit{\'e}s et
  Statistiques}, 56(3):2141 --2161, 2020.

\bibitem{benaych-georgesFluctuationsExtremeEigenvalues2011}
F.~Benaych-Georges, A.~Guionnet, and M.~Maida.
\newblock Fluctuations of the extreme eigenvalues of finite rank deformations
  of random matrices.
\newblock {\em Electronic Journal of Probability}, 16(0):1621--1662, 2011.

\bibitem{boucheronConcentrationInequalitiesNonasymptotic2013}
S.~Boucheron, G.~Lugosi, and P.~Massart.
\newblock {\em Concentration Inequalities: A Nonasymptotic Theory of
  Independence}.
\newblock Oxford University Press, Oxford, 1st ed edition, 2013.

\bibitem{bourgade2017eigenvector}
P.~Bourgade and H.-T. Yau.
\newblock The eigenvector moment flow and local quantum unique ergodicity.
\newblock {\em Communications in Mathematical Physics}, 350(1):231--278, 2017.

\bibitem{capitaineLargestEigenvaluesFinite2009}
M.~Capitaine, C.~Donati-Martin, and D.~F{\'e}ral.
\newblock {The largest eigenvalues of finite rank deformation of large Wigner
  matrices: Convergence and nonuniversality of the fluctuations}.
\newblock {\em The Annals of Probability}, 37(1):1 -- 47, 2009.

\bibitem{capitaineCentralLimitTheorems2012}
M.~Capitaine, C.~Donati-Martin, and D.~F{\'e}ral.
\newblock {Central limit theorems for eigenvalues of deformations of Wigner
  matrices}.
\newblock {\em Annales de l'Institut Henri Poincar{\'e}, Probabilit{\'e}s et
  Statistiques}, 48(1):107 -- 133, 2012.

\bibitem{castellano2017}
C.~Castellano and R.~Pastor-Satorras.
\newblock Relating topological determinants of complex networks to their
  spectral properties: structural and dynamical effects.
\newblock {\em Physical Review X}, 7(4):041024, 2017.

\bibitem{chakrabartyEigenvaluesOutsideBulk2020}
A.~Chakrabarty, S.~Chakraborty, and R.~S. Hazra.
\newblock Eigenvalues outside the bulk of inhomogeneous {E}rd{\H
  o}s--{R}{\'e}nyi random graphs.
\newblock {\em Journal of Statistical Physics}, 181(5):1746--1780, 2020.

\bibitem{CHdHS2021}
A.~Chakrabarty, R.~S. Hazra, F.~den Hollander, and M.~Sfragara.
\newblock Spectra of adjacency and {L}aplacian matrices of inhomogeneous
  {E}rd{\H o}s-{R}{\'e}nyi random graphs.
\newblock {\em Random Matrices Theory and Applications}, 10:2150009, 2021.

\bibitem{chungConnectedComponentsRandom2002}
F.~Chung and L.~Lu.
\newblock Connected components in random graphs with given expected degree
  sequences.
\newblock {\em Annals of Combinatorics}, 6(2):125--145, 2002.

\bibitem{chungEigenvaluesRandomPower2003}
F.~Chung, L.~Lu, and V.~Vu.
\newblock Eigenvalues of random power law graphs.
\newblock {\em Annals of Combinatorics}, 7(1):21--33, 2003.

\bibitem{dionigiSpectralSignatureBreaking2021}
P.~Dionigi, D.~Garlaschelli, F.~den Hollander, and M.~Mandjes.
\newblock A spectral signature of breaking of ensemble equivalence for
  constrained random graphs.
\newblock {\em Electronic Communications in Probability}, 26(none), Jan. 2021.

\bibitem{erdosSpectralStatisticsErdosRenyi2012}
L.~Erd{\H o}s, A.~Knowles, H.-T. Yau, and J.~Yin.
\newblock Spectral statistics of {Erd{\H o}s}--{R{\'e}nyi} graphs {II}:
  eigenvalue spacing and the extreme eigenvalues.
\newblock {\em Communications in Mathematical Physics}, 314(3):587--640, 2012.

\bibitem{erdosSpectralStatisticsErdos2013}
L.~Erd{\H o}s, A.~Knowles, H.~T. Yau, and J.~Yin.
\newblock Spectral statistics of {E}rd{\H o}s--{R}{\'e}nyi graphs {I}: Local
  semicircle law.
\newblock {\em Annals of Probability}, 41:2279--2375, 2013.

\bibitem{feralLargestEigenvalueRank2007}
D.~F{\'e}ral and S.~P{\'e}ch{\'e}.
\newblock The largest eigenvalue of rank one deformation of large wigner
  matrices.
\newblock {\em Communications in Mathematical Physics}, 272(1):185--228, 2007.

\bibitem{feralLargestEigenvaluesSample2009}
D.~F{\'e}ral and S.~P{\'e}ch{\'e}.
\newblock The largest eigenvalues of sample covariance matrices for a spiked
  population: {Diagonal} case.
\newblock {\em Journal of Mathematical Physics}, 50(7):073302, 2009.

\bibitem{furediEigenvaluesRandomSymmetric1981}
Z.~F{\"u}redi and J.~Koml{\'o}s.
\newblock The eigenvalues of random symmetric matrices.
\newblock {\em Combinatorica}, 1(3):233--241, 1981.

\bibitem{garlaschelliCovarianceStructureBreaking2018}
D.~Garlaschelli, F.~den Hollander, and A.~Roccaverde.
\newblock Covariance {Structure} {Behind} {Breaking} of {Ensemble}
  {Equivalence} in {Random} {Graphs}.
\newblock {\em Journal of Statistical Physics}, 173(3-4):644--662, Nov. 2018.

\bibitem{krivelevichLargestEigenvalueSparse2001}
M.~Krivelevich and B.~Sudakov.
\newblock The largest eigenvalue of sparse random graphs.
\newblock {\em Combinatorics, Probability and Computing}, 12(1):61--72, 2003.

\bibitem{le2018concentration}
C.~M. Le, E.~Levina, and R.~Vershynin.
\newblock Concentration of random graphs and application to community
  detection.
\newblock In {\em Proceedings of the International Congress of Mathematicians:
  Rio de Janeiro 2018}, pages 2925--2943. World Scientific, 2018.

\bibitem{martin2014localization}
T.~Martin, X.~Zhang, and M.~E. Newman.
\newblock Localization and centrality in networks.
\newblock {\em Physical review E}, 90(5):052808, 2014.

\bibitem{newman2006finding}
M.~E. Newman.
\newblock Finding community structure in networks using the eigenvectors of
  matrices.
\newblock {\em Physical review E}, 74(3):036104, 2006.

\bibitem{pastorsatorras2018}
R.~Pastor-Satorras and C.~Castellano.
\newblock Eigenvector localization in real networks and its implications for
  epidemic spreading.
\newblock {\em Journal of Statistical Physics}, 173(3):1110--1123, 2018.

\bibitem{pecheLargestEigenvalueSmall2006}
S.~P{\'e}ch{\'e}.
\newblock The largest eigenvalue of small rank perturbations of {Hermitian}
  random matrices.
\newblock {\em Probability Theory and Related Fields}, 134(1):127--173, Jan.
  2006.

\bibitem{squartiniBreakingEnsembleEquivalence2015}
T.~Squartini, J.~de~Mol, F.~den Hollander, and D.~Garlaschelli.
\newblock Breaking of {Ensemble} {Equivalence} in {Networks}.
\newblock {\em Physical Review Letters}, 115(26):268701, Dec. 2015.

\bibitem{squartiniAnalyticalMaximumlikelihoodMethod2011}
T.~Squartini and D.~Garlaschelli.
\newblock Analytical maximum-likelihood method to detect patterns in real
  networks.
\newblock {\em New Journal of Physics}, 13(8):083001, 2011.

\bibitem{tulino2004random}
A.~M. Tulino and S.~Verd{\'u}.
\newblock Random matrix theory and wireless communications.
\newblock {\em Foundations and Trends in Communications and Information
  Theory}, 1(1):1--182, 2004.

\bibitem{vanderhofstadRandomGraphsComplex2017}
R.~van~der Hofstad.
\newblock {\em Random {Graphs} and {Complex} {Networks} {Vol} 1}.
\newblock Cambridge University Press, Cambridge, 2017.

\end{thebibliography}
\end{document}